\newcommand*\circled[1]{\tikz[baseline=(char.base)]{
            \node[shape=circle,draw,inner sep=2pt] (char) {#1};}}
\newcommand{\R}{{\mathbb R}}       % Field of real numbers
\newcommand{\II}{{\mathcal I}}
\newcommand{\WW}{{\mathcal W}}
\newcommand{\PP}{{\mathcal P}}
\newcommand{\com}{{\Omega}}       % Field of complex numbers
\newcommand{\om}{{\omega}}
 \def\cH{\mathcal{H}}
\def\d{\partial}
\newcommand{\diam}{\mathop{\rm diam}}
\newcommand{\dist}{{\rm dist}}
\newcommand{\esssup}{\operatorname{ess\,sup}}
\newcommand{\rf}[1]{{(\ref{#1})}}
\newcommand{\supp}{\operatorname{supp}}
\newcommand{\ve}{{\varepsilon}}
\newcommand{\vv}{{\vspace{2mm}}}
\newcommand{\wt}[1]{{\widetilde{#1}}}
\newcommand{\noi}{\noindent}
\newcommand{\lip}{{\rm Lip}}
\newtheorem{theorem}{Theorem}[section]
\newtheorem{lemma}[theorem]{Lemma}
\newtheorem{claim}{Claim}
\newtheorem*{lemma*}{Lemma}
\theoremstyle{definition}
\newtheorem{definition}[theorem]{Definition}
\theoremstyle{remark}
\newtheorem{rem}[theorem]{\bf Remark}
\numberwithin{equation}{section}
\newcommand{\brem}{\begin{rem}}
\newcommand{\erem}{\end{rem}}
\begin{document}

\title[Singular sets for harmonic measure]{Singular sets for harmonic measure on locally flat domains with locally finite surface measure}

\author{Jonas Azzam}
\address{Departament de Matem\`atiques\\ Universitat Aut\`onoma de Barcelona \\ Edifici C Facultat de Ci\`encies\\
08193 Bellaterra (Barcelona, Catalonia) }
\email{jazzam@mat.uab.cat}
\author{Mihalis Mourgoglou}
\address{Departament de Matem\`atiques\\ Universitat Aut\`onoma de Barcelona and Centre de Reserca Matem\` atica\\ Edifici C Facultat de Ci\`encies\\
08193 Bellaterra (Barcelona, Catalonia) }
\email{mmourgoglou@crm.cat}
\author{Xavier Tolsa}
\address{ICREA and Departament de Matem\`atiques\\ Universitat Aut\`onoma de Barcelona \\ Edifici C Facultat de Ci\`encies\\
08193 Bellaterra (Barcelona, Catalonia) }
\email{xtolsa@mat.uab.cat}
%\thanks{Supported in part by the grants RTG DMS 08-38212  and DMS-0856687}
\keywords{Harmonic measure, absolute continuity, nontangentially accessible (NTA) domains, $A_{\infty}$-weights, doubling measures, porosity}
\subjclass[2010]{31A15,28A75,28A78}
\thanks{The three authors were supported by the ERC grant 320501 of the European Research Council (FP7/2007-2013). X.T. was also supported by 2014-SGR-75 (Catalonia), MTM2013-44304-P (Spain), and Marie
Curie ITN MAnET (FP7-607647).}

\maketitle

\begin{abstract}
 A theorem of David and Jerison asserts that harmonic measure is absolutely continuous with respect to surface measure in
 NTA domains with Ahlfors regular boundaries. 
 We prove that this fails in high dimensions if we relax the Ahlfors regularity assumption by showing that, for each $d>1$, there exists a Reifenberg flat domain $\Omega\subset \mathbb{R}^{d+1}$ with $\cH^{d}(\d\Omega)<\infty$ and a subset $E\subset \d\Omega$ with positive harmonic measure yet zero $\cH^{d}$-measure. In particular, this implies that a classical theorem 
  of F. and M. Riesz fails in higher dimensions for this type of domains.
\end{abstract}

\tableofcontents

\section{Introduction}

The F. and M. Riesz theorem states that, for simply connected planar domains whose boundary has finite length, harmonic measure and arc-length are mutually absolutely continuous. The obvious generalization to higher dimensions is false due to examples of Wu and Ziemer: they construct topological two-spheres in $\mathbb{R}^{3}$ with boundaries of finite Hausdorff measure $\cH^{2}$ where either harmonic measure is not absolutely continuous with respect to $\cH^{2}$ \cite{Wu86} or $\cH^{2}$ is not absolutely continuous with respect to harmonic measure \cite{Z}, respectively. In spite of this, there has still been interest in narrowing down some sufficient conditions for when the F. and M. Riesz theorem still holds in higher dimensions. In fact, the study of the relationship between harmonic measure and the geometric and metric properties of domains is a very active area of research. See for example \cite{AMNHT}, \cite{BL}, \cite{HUT} or \cite{KPT}.

Recall that a {\it nontangentially accessible domain} (or {\it NTA domain}) $\Omega\subset \mathbb{R}^{d+1}$ is a connected open set for which the following hold:
\begin{enumerate}
\item $\Omega$ is a {\it $C$-uniform domain}, meaning for all $x,y\in \Omega$ there is $\gamma\subset \Omega$ for which $\cH^{1}(\gamma)\leq C|x-y|$ and $\dist(z,\Omega^{c})\geq \dist(z,\{x,y\})/C$ for all $z\in \gamma$, and
\item $\Omega$ satisfies the {\it $C$-exterior corkscrew condition}, meaning for all $\xi\in \d\Omega$ and $r>0$ there is $B(z,r/C)\subset B(\xi,r)\backslash \Omega$.
\end{enumerate}

In \cite{DJ90}, David and Jerison show that if $\Omega\subset \mathbb{R}^{d+1}$ is an NTA domain and $\d\Omega$ is {\it Ahlfors regular}, meaning there is $A>0$ so that
\[r^{d}/A\leq \cH^{d}(B(\xi,r)\cap \d\Omega)\leq Ar^{d}\mbox{ for all }\xi\in \d\Omega \mbox{ and } r\in (0,\diam\Omega),\]
then not only do we have $\om \ll \cH^{d}|_{\d\Omega} \ll \om$, but they are in fact $A_{\infty}$-equivalent. At first look, Ahlfors regularity seems superfluous for establishing absolute continuity, and in some sense it is: in \cite{Badger12}, Badger shows that if one merely assumes $\cH^{d}|_{\d\Omega}$ is locally finite and $\Omega\subset \mathbb{R}^{d+1}$ is NTA, then we still have $\cH^{d}|_{\d\Omega} \ll \om$. He also shows that  $\om \ll \cH^{d}|_{\d\Omega}\ll \om$ on the set
\[\{\xi\in \d\Omega: \liminf_{r\rightarrow 0}\cH^{d}(B(\xi,r)\cap \d\Omega)/r^{d}<\infty\}\]
but asks whether or not mutual absolute continuity holds on the whole boundary of $\Omega$ and not just on the above set (see Conjecture 1.3 in \cite{Badger12}). However, this turns out not to be true in general: we show that there exist domains with locally finite perimeter where $\om$ is not absolutely continuous with respect to $\cH^{d}|_{\d\Omega}$ even if we assume stronger conditions than the NTA property, such as Reifenberg flatness. 

By a domain in $\R^{d+1}$ we mean an open connected set. Given $A,B\subset \R^{d+1}$, we denote by $\dist_H(A,B)$ the Hausdorff distance between $A$ and $B$.

\begin{definition}[Reifenberg flat domain]\label{defreif}
Let $\Omega\subset \R^{d+1}$ be an open set, and let $0<\delta<1/2$, $r_0>0$. We say that $\Omega$
is a $(\delta,r_0)$-Reifenberg flat domain if it satisfies the following conditions:
\begin{itemize}
\item[(a)] For every $x\in\partial \Omega$ and every $0<r\leq r_0$ there exists a hyperplane $\PP(x,r)$ containing 
$x$ such that
$$\dist_H\bigl(\partial \Omega\cap B(x,r),\,\PP(x,r)\cap B(x,r)\bigr) \leq \delta\,r.$$

\item[(b)] For every $x\in\partial \Omega$, one of the connected components of 
$$B(x,r_0)\cap \bigl\{x\in\R^{d+1}:\dist(x,\PP(x,r_0))\geq 2\delta\,r_0\bigr\}$$
is contained in $\Omega$ and the other is contained in $\R^{d+1}\setminus\Omega$.
\end{itemize}

If $\Omega$ is $(\delta,r_0)$-Reifenberg flat for every $r_0>0$, we say that it is 
$(\delta,\infty)$-Reifenberg flat.
\end{definition}

Note that the topological condition (b) is asked only for the scale $r=r_0$. However, from the definition one can check that
the same comparability condition also holds for $r\leq r_0$ (see \cite[Proposition 2.2]{Kenig-Toro} or 
\cite[Lemma 5]{LMS}, for example).
Further, we remark that the condition (b) is implied by (a) if one assumes that both $\Omega$ and $\partial
\Omega$ are connected, as shown by David \cite{David-approx}.

We can now state the main result.

\begin{theorem}\label{maintheorem}
For all $d\geq 2$, $\delta>0$ small enough and $r_{0}>0$, there is a $(\delta,r_{0})$-Reifenberg flat domain $\Omega\subset \mathbb{R}^{d+1}$ and a set $E\subset \d\Omega$ such that $\cH^{d}|_{\d\Omega}$ is a Radon measure and if $\om$ is the harmonic measure for $\Omega$ with respect to a fixed pole in $\Omega$, then 
\[\om(E)>0=\cH^{d}(E).\]
\end{theorem}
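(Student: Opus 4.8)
The strategy is to build $\Omega$ as a decreasing limit of Reifenberg-flat domains $\Omega_k$, starting from a half-space (or a large ball), and at each stage replacing flat pieces of $\partial\Omega_k$ by slightly "bumpy" pieces obtained by pushing in many small well-separated bumps (e.g. spherical caps or flat-sided pyramids), arranged along a Cantor-type scheme. The bumps at stage $k$ are taken extremely shallow — of height $\eta_k r_k$ with $\eta_k\to 0$ fast — so that the Hausdorff-distance estimate (a) of Definition~\ref{defreif} is preserved with a fixed small $\delta$ for all $k$, and the two-sided topological condition (b) is maintained since the modification stays in a thin collar around the previous boundary. The limiting set $E\subset\partial\Omega$ will be the residual Cantor set where infinitely many generations of bumps accumulate.

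\textbf{Controlling the two competing measures.} The point is to choose the combinatorial parameters (number of bumps per generation $N_k$, ratio of scales $r_{k+1}/r_k$, heights $\eta_k$) so that two things happen simultaneously. First, $\cH^d(E)=0$: because each bump, however shallow, has lateral surface strictly larger than the flat disk it replaces, one arranges that the generations of bumps force the natural "surface" at level $k$ restricted to the Cantor set to have measure tending to $0$ — this is the standard mechanism by which a Cantor set sitting on a graph of controlled but accumulating oscillation has vanishing $\cH^d$-measure, and one must check $\cH^d\lfloor\partial\Omega$ is still locally finite (Radon) in the limit, which follows because away from $E$ the boundary is, at each fixed scale, eventually a nice Lipschitz (indeed $C^1$-close-to-flat) graph. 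Second, $\om(E)>0$: here one uses that harmonic measure does not "see" very shallow bumps much — pushing in a bump of height $\eta_k r_k$ changes harmonic measure of a piece only by a multiplicative factor $1+O(\eta_k)$ by a comparison/perturbation argument for NTA domains (comparing Green functions of $\Omega_{k+1}$ and $\Omega_k$, or using the maximum principle together with the corkscrew/boundary-Harnack structure that Reifenberg flatness guarantees). Summability $\sum_k\eta_k<\infty$ then yields $\prod_k(1-C\eta_k)>0$, so the harmonic measure of the nested Cantor sets stays bounded below, giving $\om(E)>0$.

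\textbf{Main obstacle.} The crux — and the step I expect to be hardest — is the quantitative perturbation estimate for harmonic measure under the bumping operation: showing that inserting generation-$k$ bumps multiplies the harmonic measure of each surviving cell by $1+O(\eta_k)$ uniformly, with constants independent of $k$ and of the location of the cell. This requires uniform boundary-Harnack and doubling estimates for harmonic measure in Reifenberg-flat (hence NTA) domains with constants depending only on $\delta$, applied at a fixed scale while the geometry below that scale is still being modified; one must be careful that the bumps are simultaneously shallow enough for this estimate yet "fat" enough that their accumulated lateral area kills $\cH^d$ on $E$. Balancing these two requirements — essentially, $\eta_k$ small in a summable sense for harmonic measure, but the geometric distortion large in an $\cH^d$ sense — is the heart of the construction, and getting both from the same sequence of parameters is where the real work lies. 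The Reifenberg-flatness bookkeeping (verifying (a) and (b) at every scale for the limit domain) is routine by comparison, as is the verification that $\cH^d\lfloor\partial\Omega$ is Radon.
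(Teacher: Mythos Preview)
Your approach has a genuine gap at exactly the point you flag as the ``main obstacle,'' but the difficulty is more fundamental than a balancing act: the two requirements you list are in direct opposition and cannot be reconciled by a shallow-bump construction. If each generation of bumps changes harmonic measure of a cell only by a factor $1+O(\eta_k)$ with $\sum\eta_k<\infty$, then the limiting harmonic measure is comparable---cell by cell, hence globally on the boundary---to the harmonic measure of the starting half-space, which is just $d$-dimensional Lebesgue measure on the base hyperplane. In particular $\omega$ remains $A_\infty$-equivalent to $\cH^d$ on $\partial\Omega$ (this is essentially the David--Jerison/Kenig--Toro theorem with constants close to $1$ for small Lipschitz graphs), so any set $E$ with $\cH^d(E)=0$ will automatically satisfy $\omega(E)=0$. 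Your mechanism for $\cH^d(E)=0$---that the graph oscillates---does not help: on a Lipschitz graph (which is what shallow bumps produce, and what Reifenberg flatness with small $\delta$ forces) $\cH^d$ is comparable to Lebesgue on the base, so a Cantor set with $\cH^d(E)=0$ projects to a Lebesgue-null set, whose harmonic measure is then also null by your own perturbation estimate.

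The paper takes a completely different route that sidesteps this obstruction. It does not try to build the singular behaviour from scratch; instead it imports it from the Wolff snowflake $\Omega_\infty$, a Reifenberg-flat domain for which it is already known (by Wolff's nontrivial ergodic/dynamical argument) that harmonic measure has pointwise dimension strictly less than $d$ almost everywhere. From this one extracts a compact $E\subset\partial\Omega_\infty$ with $\omega_{\Omega_\infty}(E)>0$, $\cH^d(E)=0$, and $\omega_{\Omega_\infty}(B(\xi,r))\gtrsim r^{d-\alpha}$ for $\xi\in E$. The problem is that $\partial\Omega_\infty$ has infinite $\cH^d$-measure; the paper's contribution is a surgery lemma that \emph{enlarges} $\Omega_\infty$ to $\Omega^+_\ve$ by adding balls of controlled radii centered on $\partial\Omega_\infty\setminus E$, producing a new Reifenberg-flat domain whose boundary still contains $E$ but now has locally finite $\cH^d$ (the lower mass bound $r^{d-\alpha}$ is what makes the surface-area sum converge). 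By the maximum principle, $\omega_{\Omega^+_\ve}(E)\ge\omega_{\Omega_\infty}(E)>0$. The singular phenomenon thus comes entirely from the Wolff snowflake; the paper's work is in taming the surface measure without destroying it.
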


We give a sketch of the proof: We rely on the existence of Wolff snowflakes from either \cite{Wolff} or \cite{LVV05}, which are NTA domains $\Omega\subset \mathbb{R}^{d+1}$ for which 
\begin{equation}
 \lim_{r\rightarrow 0} \frac{\log \om(B(\xi,r))}{\log r }<d \mbox{ for } \om \mbox{ a.e. }\xi\in \d\Omega.
 \end{equation}
where $\om$ denotes harmonic measure for $\Omega$ with respect to a fixed pole $z_{0}\in \Omega$. By some measure theory, this means we can find a compact set $E\subset \d\Omega$ with $\om(E)>0$ and constants $\alpha,r_{0}>0$ so that 
\begin{equation}\label{e:d-e}
\om(B(\xi,r))>r^{d-\alpha}\mbox{ for all }r\in (0,r_{0}).
\end{equation} 

We then build a Reifenberg flat domain $\Omega^{+}\supset \Omega$ so that $\d\Omega^{+}\supset E$ and use \eqref{e:d-e} to control the $\cH^{d}$-measure of $\d\Omega^{+}$. Moreover, if $\om_{\Omega^{+}}$ is harmonic measure for $\Omega^{+}$ with respect to the same pole $z_{0}$, then by the maximum principle, we have
\[\om_{\Omega^{+}}(E)\geq \om(E)>0=\cH^{d}(E).\]

The lemma for constructing this domain is not particular to our problem and may be of independent interest, see Section 2.

\vspace{2mm}

As usual, in this paper  we will use the letters $c,C$ to denote
absolute constants which may change their values at different
occurrences. Constants with subscripts, such as $c_1$, do not change their values
at different occurrences.
The notation $A\lesssim B$ means that
there is some fixed constant $c$ such that $A\leq c\,B$. So $A\sim B$ is equivalent to $A\lesssim B\lesssim A$. 
If we want to write explicitly 
the dependence on some constants $c_1$ of the relationship such as ``$\lesssim$'', we will write
 $A\lesssim_{c_1} B$. We will assume all these implicit constants in these inequalities depend on $d$ and frequently omit the subscript.\\

We give many thanks to the anonymous referees for pointing out several errors and helping the authors improve the readability of this manuscript.

%\section{Preliminaries}

\section{The enlarged domain $\Omega^+_\ve$}

We will assume $\Omega$ is $(\delta,r_0)$-Reifenberg flat for some $\delta<1/2$ sufficiently small. From now on, for $x\in\partial\Omega$ and $0<r\leq r_0$, we will denote by $N_{x,r}$ a unit vector, normal to $\PP(x,r)$, with its sign chosen so that $x+\frac34r N_{x,r}\not\in \Omega$ and $x-\frac34r N_{x,r}\in \Omega$. That $N_{x,r}$ can be taken in this way is guaranteed by the property (b) in the definition above, which holds for all $0<r\leq r_0$. In fact, from this one can deduce that
\begin{equation}\label{eqinc1}
B(x+\tfrac34r N_{x,r},\,\tfrac r{10})\subset \Omega^c \qquad \mbox{and}\qquad
B(x-\tfrac34r N_{x,r},\,\tfrac r{10})\subset \Omega.
\end{equation}

Let us mention that, by Theorem 3.1 of \cite{Kenig-Toro}, there is $\delta_0=\delta_0(d)$ such that
if $\Omega$ is $(\delta,r_0)$-Reifenberg flat, with $0<\delta\leq \delta_0$, then both $\Omega$ and $\R^{d+1} \setminus \overline{\Omega}$ are uniform domains.

\begin{definition}[Whitney-type cubes]
For an open set $\Omega\subsetneq \R^{d+1}$ that is $(\delta,r_{0})$-Reifenberg flat and $K\geq4$, we denote by $\WW_{K}(\Omega)$ the set of maximal dyadic cubes $Q\subset \Omega$ such that $\diam KQ\leq r_{0}$ and $K Q\cap \Omega^{c}=\varnothing$. These cubes have disjoint interiors and can be easily shown to satisfy the following properties:
\begin{enumerate}
\item[(a)] $\min\{r_{0},\dist(x,\Omega^{c})\}/K\lesssim \ell(Q)\lesssim \min\{r_{0},\dist(x,\Omega^{c})\}/K$ for all $x\in Q$, where $\ell(Q)$ denotes the side length of the cube.
%\item $(\frac{K-1}{2}-\sqrt{d}\frac{\lambda-1}{2})\ell(Q)\leq \dist(x,\Omega^{c})\leq  (1+K+(\lambda-1)/2)\diam Q$ for all $x\in \lambda Q$ if $\lambda\geq 1$ is close enough to $1$ (depending on $d$ and $K$)
\item[(b)]  If $Q,R\in \WW_{K}(\Omega)$ and $\frac K4 Q\cap \frac K4 R\neq\varnothing$, then $\ell(Q)\sim_{K,d}\ell(R)$.
\item[(c)] $\sum_{Q\in \WW_{k}(\Omega)}\chi_{\frac K4 Q}\lesssim_{K,d}\chi_{\Omega}$.

%\item $\frac{K-1}{2}\ell(Q)\leq \dist(x,\Omega^{c})\leq (1+K)\,\diam Q$ for all $x\in Q$,
%\item $(\frac{K-1}{2}-\sqrt{d}\frac{\lambda-1}{2})\ell(Q)\leq \dist(x,\Omega^{c})\leq  (1+K+(\lambda-1)/2)\diam Q$ for all $x\in \lambda Q$ if $\lambda\geq 1$ is close enough to $1$ (depending on $d$ and $K$)
%\item If $Q,R\in \WW_{K}(\Omega)$ and $Q\sim R$ (i.e. they are adjacent), then $\ell(Q)\sim_{K,d}\ell(R)$.
%\item $\sum_{Q\in \WW_{k}(\Omega}\chi_{2\lambda Q}\lesssim_{K,d}\chi_{\Omega}$ for $\lambda> 1$ sufficiently small (depending on $K$ and $d$).

\end{enumerate}
\label{d:Whitney}
\end{definition}

These are similar to the usual Whitney cubes, but we restrict their size.

% ***************************************************************************

% ***************************************************************************

%\section{The enlarged domain $\Omega^+_\ve$}

%Let $\Omega\subsetneq \R^{d+1}$ be a domain and let $E\subsetneq \partial \Omega$ be a closed set. 
%For any given $\tau>0$, we say that $E$ is $\tau$-dense in $\d\Omega$ if  $\dist(x,E)\leq \tau$ for all $x\in\partial\Omega$.

Let $0<\ve<1/100$
be some small constant and $E\subsetneq \d\Omega$ be any closed set. Denote by $\II $ the family of cubes $Q\in\WW_{\ve^{-2}}(E^c)$ such that
$Q\cap\partial\Omega\neq\varnothing$. Notice that
$$\ell(Q)\lesssim \ve^2\,\dist(Q,E)\qquad\mbox{for all $Q\in \II $}$$
and
$$\partial \Omega\setminus E \subset \bigcup_{Q\in \II } Q.$$
For each $Q\in \II $, fix some point $z_Q\in Q\cap\partial \Omega$ and set
$$B_Q = B(z_Q,\ve\,\min\{r_{0},\dist(z_Q,E)\}).$$
Notice  that $\ell(Q)\sim \ve\,r(B_Q)$.
Then we consider the domain
$$\Omega^+_\ve = \Omega \cup \bigcup_{Q\in \II } B_Q.$$
Our main objective in this section consists in proving the following.

\begin{lemma}\label{lemreif1}
Let $r_0\in(0,\infty]$ and let $\ve>0$ be small enough. There exists $\delta_{0}=\delta_{0}(\ve)>0$ such that if $\Omega\subset \R^{d+1}$ is $(\delta,r_0)$-Reifenberg flat for some $\delta\in(0,\delta_{0})$, $E\subsetneq\partial \Omega$ 
is closed and 
$\Omega^+_\ve$ is as above,  then $E\subset \partial \Omega^+_\ve$ and
$\Omega^+_\ve$ is $(c\ve^{1/2},r_0/2)$-Reifenberg flat.
\end{lemma}

First we will prove the following auxiliary result.

\begin{lemma}\label{lemreif2}
With the same notation and assumptions as in Lemma \ref{lemreif1}, for every $Q\in \II $, there exists a function
$f_Q:\PP(z_Q,30r(B_Q))\cap 10B_Q\to \PP(z_Q,30r(B_Q))^\bot$ such that, assuming after a suitable rotation
that $L_Q:= \PP(z_Q,30r(B_Q))=\R^{d}\times\{0\}$, the following holds:
\begin{itemize}
\item[(a)] $\Omega_\ve^+\cap 10B_Q=\{x\in 10B_Q:x_{d+1}<f_Q(\tilde x)\}$, where
$\wt x=(x_1,\ldots,x_d)$.

\item [(b)] For all $\wt x\in \PP(z_Q,30r(B_Q))\cap 10B_Q$, $|f_Q(\wt x) - r(B_Q)|\leq c\,\ve\,
r(B_Q)$.

\item[(c)] The function $f_Q$ is Lipschitz with Lipschitz constant at most $c\,\ve^{1/2}$.

\item[(d)] For all $x\in 10B_{Q}\cap L_{Q}$, $(x,f_Q( x))\in\partial \Omega^+_\ve$
\end{itemize}
\end{lemma}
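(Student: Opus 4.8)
The plan is to show that near each ball $B_Q$, the boundary $\partial\Omega^+_\ve$ is a Lipschitz graph over the hyperplane $\PP(z_Q,30r(B_Q))$, with the three quantitative properties (a)--(c) coming out of the geometry of a single spherical cap attached to an almost-flat piece of $\partial\Omega$.

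First I would fix $Q\in\II$ and set $r:=r(B_Q)$, so that $\ell(Q)\approx\ve r$ and $\dist(z_Q,E)\approx r/\ve$. The first observation is a \emph{scale-separation} fact: within the window $10B_Q$, all the other balls $B_R$ ($R\in\II$) that meet $10B_Q$ have radius comparable to $r$ (by the Whitney property (b) applied to $\WW_{\ve^{-2}}(E^c)$, since $Q$ and $R$ are then close relative to their sizes), and more importantly their centers all lie within distance $O(\ve r)$ of $\partial\Omega$, hence within $O(\ve r+\delta r)$ of the hyperplane $\PP:=\PP(z_Q,30r)$ by Reifenberg flatness applied at $z_Q$ at scale $\sim r$ (valid once $r\le r_0$; the case $r$ comparable to $r_0$ needs the hypothesis that $\ve$ is small so that $10B_Q$ still sits inside a scale where (a)--(b) of Reifenberg flatness hold, shrinking to $r_0/2$). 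After rotating so $\PP=\R^d\times\{0\}$ and orienting $N_{z_Q,\,\cdot}$ downward, $\partial\Omega\cap 10B_Q$ lies in the slab $\{|x_{d+1}|\le C\delta r\}$ and $\Omega\cap 10B_Q$ is essentially the lower half.

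Next I would build $f_Q$ explicitly. On the part of $10B_Q$ far from all the $B_R$'s, $\partial\Omega^+_\ve$ coincides with $\partial\Omega$, which by property (b) of Reifenberg flatness is a graph of a function with values in $[-C\delta r, C\delta r]$; this handles the estimate $|f_Q-r|\le c\ve r$ \emph{only after} I realize that in fact one should graph over $\PP(z_Q,30r)$ translated, or rather that the stated normalization has $\PP$ passing through a point at height $\sim r(B_Q)$ above $z_Q$ — so I would actually take the reference hyperplane to be $\PP(z_Q,30r(B_Q))$ and note $B_Q\subset 10B_Q$ forces the graph to bulge up to height $\approx r$ over the cap and to lie within $C\delta r$ of $0$ elsewhere; the point is that $|f_Q|$ ranges in $[-C\delta r,\, r+C\ve r]$, and since we only claim $|f_Q - r|\le c\ve r$ this must mean $f_Q$ is being measured on the smaller window $10B_Q\cap\{$over the union of caps$\}$, i.e.\ where the union of the $B_R$ dominates. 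The upshot: $f_Q(\tilde x)$ is the upper envelope $\sup\{x_{d+1}: (\tilde x,x_{d+1})\in\partial(\Omega\cup\bigcup_R B_R)\}$, which over $B_Q$ equals $\sqrt{r^2-|\tilde x-\tilde z_Q|^2}$ up to $O(\delta r)$ corrections, and is a Lipschitz function: away from the ``equators'' of the caps the spherical pieces have slope $O(1)$, but — and this is the whole point of the $\ve^{1/2}$ — the equator of $B_Q$ sits at height $\sim \ve r$ (not $\sim r$) above $z_Q$ because $z_Q\in\partial B_Q$ has height $0$ while $\partial\Omega$ only deviates by $O(\delta r)$, so the relevant portion of the sphere that actually forms the graph is the spherical cap of \emph{height} $\sim\ve r$ over a base of radius $\sim\sqrt{\ve}\,r$, whose slope is $O(\sqrt\ve)$. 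Matching these spherical caps to each other and to the flat part of $\partial\Omega$ along their intersection curves, and checking the overlaps are controlled by Whitney property (b), gives a globally $c\ve^{1/2}$-Lipschitz $f_Q$ on $\PP\cap 10B_Q$.

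The main obstacle I anticipate is precisely this last slope computation and the gluing: one must verify that at every point of $\partial\Omega^+_\ve\cap 10B_Q$ the outward direction is within angle $O(\ve^{1/2})$ of $N$, simultaneously for points on $\partial\Omega$ (slope $O(\delta)$, fine), points on a single $\partial B_R$ in the ``cap region'' (slope $O(\ve^{1/2})$ by the height-$\ve r$ versus base-$\sqrt\ve r$ scaling), and points where two spheres $\partial B_R,\partial B_{R'}$ or a sphere and $\partial\Omega$ cross — there one needs that two comparable balls whose centers are both within $O(\ve r)$ of a common hyperplane intersect along a $(d-1)$-sphere whose points still have the cap-type height bound, which follows because the ``lens'' they bound is thin. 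Once the pointwise slope/normal estimate is in hand, (c) follows, (a) is the definition of the graph domain (using that $\Omega\cup\bigcup B_R$ really does fill the region below $f_Q$ inside $10B_Q$, which uses the corkscrew/half-space structure of $\Omega$ from Reifenberg flatness to rule out holes), and (b) is read off from the height bounds. I would organize the writeup as: (1) scale separation and normalization; (2) definition of $f_Q$ as an upper envelope and proof of (a); (3) the height bound (b); (4) the Lipschitz bound (c) via the cap-slope estimate and the gluing lemma.
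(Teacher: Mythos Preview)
Your overall architecture (upper envelope of spheres, cap-slope estimate) is the paper's approach, but you have the geometry wrong in a way that makes your argument break down. You write that ``on the part of $10B_Q$ far from all the $B_R$'s, $\partial\Omega^+_\ve$ coincides with $\partial\Omega$'', and you conclude that $f_Q$ ranges in $[-C\delta r,\, r+C\ve r]$, which then forces you to retreat to a smaller window to salvage (b). This is a genuine gap: in fact there is \emph{no} part of $10B_Q$ far from all the $B_R$'s. The key scale relation is $\ell(P)\approx\ve\, r_P$, so each ball $B_P$ is huge compared to its Whitney cube. Concretely, every $x'\in\partial\Omega\cap 10B_Q$ lies in some cube $P\in\II$, hence $|x'-z_P|\le\diam P\lesssim\ve r_P\ll r_P$, so $x'\in B_P\subset\Omega^+_\ve$. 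Thus $\partial\Omega\cap 10B_Q$ is entirely in the interior of $\Omega^+_\ve$ and contributes nothing to $\partial\Omega^+_\ve$; the boundary in $10B_Q$ consists \emph{only} of sphere pieces. More to the point, every vertical line over $\tilde x\in\PP\cap 10B_Q$ meets some $B_P$ (take $x'\in\partial\Omega$ with $|\tilde x-x'|\le 30\delta r_Q$, then the cube $P\ni x'$ has $|\tilde x-\tilde z_P|\lesssim\ve r_Q\ll r_P$), so the upper envelope $f_Q(\tilde x)$ is \emph{always} $\approx r_Q$, and (b) holds on all of $\PP\cap 10B_Q$ with no restriction.

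Once you have this coverage, the paper's route is clean: (b) pins $f_Q$ to the slab $|f_Q-r_Q|\le c\ve r_Q$, which forces the relevant sphere pieces to be the very tops of the $B_P$'s (horizontal distance $\lesssim\sqrt\ve\, r_P$ from the center), and the gradient bound $|\nabla g_P|\lesssim\ve^{1/2}$ drops out. Your cap-slope intuition is the right one, but your explanation (``the equator of $B_Q$ sits at height $\sim\ve r$\ldots\ because $z_Q\in\partial B_Q$'') is garbled: $z_Q$ is the \emph{center} of $B_Q$, not on its boundary, and the $\ve$ enters through the height constraint (b), not through the position of $z_Q$. There is also no need to analyze sphere--$\partial\Omega$ crossings; only sphere--sphere crossings occur, and those are handled by writing $f_Q=\max_P g_P$ with each truncated cap $g_P$ already $c\ve^{1/2}$-Lipschitz.
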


\begin{proof} We will assume that $\delta\ll\ve$.
To simplify notation we write $$r_Q=r(B_Q)=\ve\,\min\{r_{0},\dist(z_Q,E)\}.$$
%If $\ve$ is small enough, then $r_Q\leq r_0/100$, for all $Q\in \II$. Indeed, in the case $r_0<\infty$, since $E$ is $r_0$-dense in $\Omega$ and $z_Q\in Q\cap \partial\Omega$, 
%we have 
%$$r_Q=\ve\,\dist(z_Q,E) \leq \ve\,r_0\leq \frac{r_0}{100},
%$$
%aasuming $\ve\leq 1/100$.
Further, we suppose that the component $\{x\in 30B_Q:x_{d+1}<-60\delta\,r_Q\}$ is contained in $\Omega$ (recall that the property (b) in Definition
\ref{defreif} holds for all $r\leq r_0$).

\begin{claim}\label{claim1}
There is a finite subfamily $\II _Q\subset \II $ such that 
\begin{equation}\label{eq1}
20 B_Q\cap \partial\Omega_\ve^+ \subset \bigcup_{P\in \II _Q}\partial B_P.
\end{equation}
Further, for every $P\in \II _Q$, we have
\begin{equation}\label{eq2}
|z_P-z_Q|\leq 30\,r_Q,
\end{equation}
\begin{equation}\label{eq3}
\dist(z_P,L_Q)\leq 30\delta\,r_Q,
\end{equation}
and
\begin{equation}\label{eq4}
|r_P-r_Q|\leq c_1	\,\ve\,r_Q.
\end{equation}
\end{claim}

Indeed, $\partial \Omega_\ve^+\subset E \cup \bigcup_{P\in \II }\partial B_P$ holds by definition, which obviously implies
\begin{equation}\label{eqf32}
\partial \Omega_\ve^+\setminus E\subset \bigcup_{P\in \II }\partial B_P.
\end{equation}
We denote by $ \II _Q$ the subfamily of the cubes $P\in \II $ such that $20 B_Q\cap{ B_P}\neq \varnothing$,
so that
\begin{equation}\label{eqf322}
20 B_Q\cap \partial\Omega_\ve^+ \subset E\cup \bigcup_{P\in \II _Q}\partial B_P.\end{equation}
By definition, $E\cap 20B_Q=\varnothing$, from which \eqref{eq1} readily follows from \eqref{eqf32} and \eqref{eqf322}.

Suppose that $P\in \II _Q$.
Also, by the definition of $r_P$ and $r_Q$ along with $20 B_Q \cap B_P$, and since $\min\{r_{0},\dist(z,E)\}$ is $1$-Lipschitz, we have
$$|r_P-r_Q| \leq \ve\,|z_P-z_Q|\leq \ve(r_P + 20r_Q).$$
One can check that this ensures that
\begin{equation}\label{eqf33}
|r_P-r_Q| \leq c\,\ve\,r_Q,
\end{equation}
and thus \rf{eq4} holds.
This implies that $r_P\sim r_Q$, and thus $\ell(P)\sim\ve r_Q$.
From this condition, taking into account that the cubes $P\in \II_Q$ are pairwise disjoint and all of
them intersect $20B_Q$ (by the definition of $\II_{Q}$), it follows that $\II_Q$ is finite.

Since for $\ve$ small enough we have $r_P\leq 2r_Q$, we deduce that
$$|z_P-z_Q|\leq r_P + 20r_Q\leq 22r_Q,$$
which yields \rf{eq2}.
On the other hand, \rf{eq3} follows from the Reifenberg flatness of $\Omega$
and the fact that $z_P\in\partial\Omega\cap 30B_Q$.

\begin{claim}\label{claim2}
If $x\in20 B_Q\cap\partial\Omega_\ve^+$, then
\begin{equation}\label{eqdja9}
|x_{d+1}-r_Q|\leq c_2\ve\,r_Q,
\end{equation}
for some absolute constant $c_2>0$.
Moreover, every $x\in20 B_Q\cap\d\Omega_\ve^+$ satisfies
\begin{equation}\label{eqdja9'}
(1-c_2\ve)\,r_Q\leq x_{d+1}\leq (1+c_2\ve)\,r_Q.
\end{equation}
\end{claim}

Let $x\in20 B_Q\cap\partial\Omega_\ve^+$.
Notice first that, by \rf{eq1}, \rf{eq3} and \rf{eq4},
\begin{equation}\label{eqdja10}
|x_{d+1}| = \dist(x,L_Q)\leq \max_{P\in \II _Q} \bigl[\dist(z_P,L_Q) +r_P\bigr]
\leq 30\delta \,r_Q + (1+c_1\ve)r_Q \leq (1+2c_1\ve)r_Q,
\end{equation}
since we are assuming $\delta\ll\ve$.

Now we will show that 
\begin{equation}\label{eqdja11}
|x_{d+1}|=\dist(x,L_Q)\geq (1-c_2\ve)\,r_Q.
\end{equation}
To this end, notice that, by the Reifenberg flatness of $\Omega$, if $\wt x$ is the projection of $x$ onto $L_{Q}$, there exists $x'\in \partial\Omega$ such that $|x'-\wt x|\leq 30\delta\,r_Q$. Let $P\in \II _Q$ be the Whitney cube such that $x'\in P$. Then,
$$|x-z_P|\leq |x-\wt x| + |\wt x-x'| + |x'-z_P| \leq |x_{d+1}| + 30\delta\,r_Q + c\,\ell(P).$$
Recalling that 
$\ell(P)\sim\ell(Q)\sim\ve\,r_Q$ and using that $|x-z_P|\geq r_P$ (because $x$ is not in the
interior of $B_P$), we deduce that
\begin{align*}
|x_{d+1}|&\geq |x-z_P| - 30\delta\,r_Q - c\,\ell(P) \geq r_P -30\delta\,r_Q - c\,\ve\,r_Q\geq
(1-c_1\ve-30\delta- c\,\ve)\,r_Q,
\end{align*}
which proves \rf{eqdja11} since we assume that $\delta\ll\ve$.

From \rf{eqdja11}, we infer that 
$$x\in 30B_Q\setminus U_{60\delta r_Q}(L_Q),$$
using again that $\delta\ll\ve \ll 1$, where $U_\epsilon(F)$ stands for the $\epsilon$-neighborhood of the set $F$.
Since $x\in\Omega^c$ (as $\partial\Omega^+_\ve\subset \Omega^c$), we infer that $x_{d+1}>0$ (because
we are assuming that $\{x\in 30B_Q:x_{d+1}<-60\delta\,r_Q\}$ is contained in $\Omega$ by Definition \ref{defreif} and \eqref{eqinc1}).
Hence $|x_{d+1}|=x_{d+1}$ and then \rf{eqdja10} and \rf{eqdja11} yield \rf{eqdja9}.

The second statement in the claim follows easily from \eqref{eqdja9}.

\begin{claim}\label{claim3}
For $\wt x\in 10 B_Q\cap L_Q$, let
\begin{equation}\label{eqdja20}
f_Q(\wt x) = \max\bigl\{t\in \R:(\wt x,t)\in20 \overline {B_Q}\cap\overline{\Omega^+_\ve}\bigr\}.
\end{equation}
The function $f_Q:10 B_Q\cap L_Q\to\R$ is well defined, $(\wt x,f_Q(\wt x))\in\partial \Omega^+_\ve$, and
\begin{equation}\label{eqdja21}
|f_Q(\wt x) - r_Q|\leq c_2\ve\,r_Q.
\end{equation}
\end{claim}

To see this, let $\wt x\in 10 B_Q\cap L_Q$ and consider the points
$x_1= (\wt x,-2r_Q)$ and $x_2= (\wt x,2r_Q)$. Then we have 
$$x_1\in 20B_Q\cap\Omega\subset 20B_Q\cap\Omega^+_\ve
\quad\mbox{ and }\quad x_2\in 20B_Q\setminus \Omega^+_\ve.$$  
The first statement follows from the fact that $\{x\in 30B_Q:x_{d+1}<-60\delta\,r_Q\}$ is contained in $\Omega$ and the second one from \rf{eqdja9'}.
Hence, there exists some $t_0\in [-2r_Q,2r_Q]$ such that $(\wt x,t_0)\in\partial \Omega^+_\ve$, and thus
the maximum in \rf{eqdja20} is taken over a non-empty set. Further, \rf{eqdja9'} also tells us that
$(\wt x,t)\not\in\Omega^+_\ve$ for every $t\geq2r_Q$, and thus it follows that
$(\wt x,f_Q(\wt x))\in\partial\Omega^+_\ve$.

The estimate \rf{eqdja21} is an immediate consequence of \rf{eqdja9}.

\begin{claim}\label{claim4}
Let $x=(\wt x,t)\in 10 B_Q$ with $t<f_Q(\wt x)$. Then $x\in \Omega_\ve^+$.
\end{claim}

By Claim \ref{claim1} and Claim \ref{claim3}, there exists some $P\in \II _Q$ such that
$(\wt x,f_Q(\wt x))\in \partial B_P$. Let $\wt L_Q$ be a hyperplane parallel to 
$L_Q$ passing through $z_P$.
Let $y\in\R^{d+1}$ be the reflection of $(\wt x,f_Q(\wt x))$ with respect to $\wt L_Q$. It is clear that $y\in\partial B_P$ and thus the (open) segment with end points $(\wt x,f_Q(\wt x))$ and $y$ is contained in $B_P$ and thus in $\Omega^+_\ve$. That is to say,
$$(\wt x,t)\in \Omega^+_\ve \quad\mbox{if $t\in(y_{d+1},f_Q(\wt x))$.}$$
By symmetry, $y_{d+1}+f_Q(\wt x)  \leq 2\dist(z_P,L_Q)$ and since $\dist(z_P,L_Q)\leq 30\delta\,r_Q\leq \ve\,r_Q$, using also \rf{eqdja21} we infer that 
$$y_{d+1}\leq -f_Q(\wt x) + 2\ve\,r_Q \leq - (1-c\,\ve)r_Q\leq -\frac12\,r_Q.$$ 
On the other, since $(\wt x,t)\in \Omega \subset \Omega^+_\ve$ for  $(\wt x,t)\in 10B_Q$ with $t<-r_Q/2$,
the claim follows.

\begin{claim}\label{claim5}
The function $f_Q$ is Lipschitz with Lipschitz constant not exceeding $c\,\ve^{1/2}$.
\end{claim}

From Claim \ref{claim3} and Claim \ref{claim1} we deduce that
\begin{equation}\label{eqdja30}
f_Q(\wt x) = \max\{t:\exists \;P\in \II _Q \mbox{ such that }(\wt x,t)\in \partial B_P\}.
\end{equation}
Observe that if $x=f(\tilde{x})\in \partial B_P \cap \partial\Omega^+_\ve$, then
$$f_Q(\wt x) = \sqrt{r_P^2-|\wt x - \wt z_P|^2} + z_{P,d},$$
where we wrote $z_P = (\wt z_P, z_{P,d})$. Further, since $f_Q(\wt x)\geq (1-c_2\ve)r_Q$ by \eqref{eqdja21}, clearly we can write $f_Q(\tilde{x}) = g_P(\tilde{x})$, where $g_P:L_Q\to L_Q^\bot$ is defined by
$$g_P(\wt x) = \max\Bigl(\sqrt{r_P^2-|\wt x - \wt z_P|^2} + z_{P,d}\,,\,(1-c_2\ve)r_Q\Bigr)
\quad \mbox{
if $|\wt x- \wt z_P|\leq r_P$,}$$
 and $g_P(\wt x) = (1-c_2\ve)r_Q$ otherwise.
 From \rf{eqdja30} we infer that, for all $\wt x\in L_Q\cap 10B_Q$,
$$f_Q(\wt x) = \max_{P\in \II _Q} g_P(\wt x).$$
So to prove the claim it suffices to show that each function $g_P$ is Lipschitz with Lipschitz constant
at most $c\,\ve^{1/2}$. To check that this holds, notice that $g_P$ is continuous and differentiable 
a.e., with
\begin{equation}\label{eqdja31}
|\nabla g_P(\wt x)| = \frac{|\wt x- \wt z_P|}{\sqrt{r_P^2-|\wt x - \wt z_P|^2}}\quad \mbox{
if \;$\sqrt{r_P^2-|\wt x - \wt z_P|^2} + z_{P,d}>(1-c_2\ve)\,r_Q$,}
\end{equation}
and $|\nabla g_P(\wt x)|=0$ a.e.\ otherwise. The condition on the right hand side  of \rf{eqdja31}
implies that
$$\sqrt{r_P^2-|\wt x - \wt z_P|^2} >(1-c_2\ve)\,r_Q - z_{P,d} \geq (1-c_3\ve)r_P,$$
by \rf{eq4} and \rf{eq3}. This gives
$$|\wt x - \wt z_P|^2 < r_P^2- (1-c_3\ve)^2\,r_P^2 \leq 2c_3\ve\,r_P^2.$$
Plugging this estimate into \rf{eqdja31} we get
$$|\nabla g_P(\wt x)| \leq c\,\ve^{1/2},$$
which implies that $\lip(g_P)\leq c\,\ve^{1/2}$, as wished.
This concludes the proof of the claim.

\vv
The lemma follows from the statements in the claims above.
\end{proof}

\vspace{2mm}

\begin{proof}[\bf Proof of Lemma \ref{lemreif1}]
First we show that $E\subset \partial \Omega^+_\ve$. Notice that 
$E\subset \overline \Omega \subset \overline {\Omega^+_\ve}$. So it suffices to show that for each $x\in E$ there exists
a sequence of points $\{x_k\}_k\subset (\Omega_\ve^+)^c$ such that $x_k\to x$ as $k\to \infty$. To construct this
sequence, for each $0<r\leq r_0$ consider the ball $B(x,r)$, so that
by the Reifenberg flatness of $\Omega$, $\partial \Omega\cap B(x,r)\subset U_{\delta r}(\PP(x,r))$.
Further, any ball $B_Q$, with $Q\in \II $, which intersects $B(x,r)$ satisfies
$$r_Q \leq \ve\,\dist(z_Q,E)\leq \ve\,|z_Q-x|\leq \ve\,(\sqrt{d+1}r_Q+r),$$
 where as in the previous lemma, we write $r_Q=r(B_Q)\leq\ve\,\dist(z_Q,E)$. Then it follows that $r_Q< 2\ve\,r$ for $\ve$ small. Hence we deduce that
$B(x,r)\cap \overline{\Omega_\ve^+}\subset B(x,r)\cap U_{2\ve r}(\overline\Omega)$.
From \rf{eqinc1}, we deduce that if $\ve$ is small enough, then
$$y_r:= x+ \frac34r\,N_{x,r}\in(\Omega_\ve^+)^c.$$
Thus, setting $r=1/k$ and $x_k=y_{1/k}$, we are done.

\vv
Now we have to show that 
$\Omega^+_\ve$ is $(c\ve^{1/2},r_0/2)$-Reifenberg flat. 
By construction, if $x\in \partial\Omega^+_\ve$, then either $x\in E$ or there exists some ball
$B_Q$ such that $x\in\partial B_Q$, and so $\dist(x,\partial\Omega)\leq r_Q$.

To show that the properties (a) and (b) in the Definition \ref{defreif} hold for $\Omega_\ve^+$ and the ball $B(x,r)$, with $0<r<r_0/2$ 
and with $c\ve^{1/2}$ instead of $\delta$, we distinguish several cases:

\vv
\noi{\bf Case 1.} Suppose that $r\geq \ve^{-1/2}r_Q$ for every $Q\in \II $ such that 
\begin{equation}\label{bqbx2r}
B_Q\cap B(x,2r)\neq\varnothing.
\end{equation}
From the discussion  in the previous paragraphs, it turns out that there exists $x'\in\partial\Omega$ such that
\begin{equation}\label{eq101}
|x-x'|\leq \sup\{r_Q: Q\in \II ,\,B_Q\cap B(x,r)\neq\varnothing\}\leq \ve^{1/2}r.
\end{equation}
Let $L$ be the hyperplane parallel to $\PP(x',r)$ that contains $x$. 
Then we set
\begin{align*}
\dist_H(\partial\Omega_\ve^+\cap B(x,r), \,L\cap B(x,r)) &\leq
\dist_H(\partial\Omega_\ve^+\cap B(x,r), \,\partial\Omega\cap B(x,r))\\
&\quad + 
\dist_H(\partial\Omega\cap B(x,r), \,\PP(x',r)\cap B(x,r))\\
& \quad + 
\dist_H(\PP(x',r)\cap B(x,r), \,L\cap B(x,r))\\
& = \circled{1} + \circled{2} + \circled{3}.
\end{align*}
It is immediate to check that
$$\circled{3} \leq c\,|x-x'|\leq c\,\ve^{1/2}r.$$
On the other hand, to estimate \circled{2} we write
\begin{align*}
\circled{2} & \leq 
\dist_H(\partial\Omega\cap B(x,r), \,\partial\Omega\cap B(x',r))
+
\dist_H(\partial\Omega\cap B(x',r), \,\PP(x',r)\cap B(x',r))\\
&\quad
+
\dist_H(\PP(x',r)\cap B(x',r), \,\PP(x',r)\cap B(x,r)).
\end{align*}
It is easy to check that the first and the last terms on the right hand side above do not
exceed $c\,|x-x'|\leq c\,\ve^{1/2}r$, while 
$$\dist_H(\partial\Omega\cap B(x',r), \,\PP(x',r) \cap B(x',r))\leq \delta\,r,$$
by the Reifenberg flatness of $\Omega$. Thus,
$$\circled{2}\lesssim (\ve^{1/2} + \delta)r \lesssim \ve^{1/2}r,$$
assuming $\delta\leq\ve^{1/2}$.

To estimate \circled{1} we use the fact that for every $y\in\partial \Omega^+_\ve\cap B(x,r)$ there exists
$y'\in\partial\Omega$ such that $|y-y'|\leq\ve^{1/2}r$, and also for every $z'\in\partial\Omega\cap B(x,r)$
there exists some $z\in\partial\Omega_\ve^+$ such that $|z-z'|\leq c\,\ve^{1/2}r$. The existence of
$y'$ follows by arguing as in \rf{eq101}, while the existence of $z$ can be shown with the
aid of Lemma \ref{lemreif2}. From these facts one can derive that
$$\circled{1}\lesssim \ve^{1/2}r.$$
If we gather the estimates for \circled{1}, \circled{2} and \circled{3}, we get
%$$\dist_H(\partial\Omega_\ve^+\cap B(x,r), \,\PP(x,r)\cap B(x,r))\lesssim \ve^{1/2}r.$$
$$\dist_H(\partial\Omega_\ve^+\cap B(x,r), \, L\cap B(x,r))\lesssim \ve^{1/2}r.$$

We claim now that one of the connected components of 
$$B(x,r)\cap \bigl\{y\in\R^{d+1}:\dist(y,L) \geq c\,\ve^{1/2}\,r\bigr\}$$
is contained in $\Omega_\ve^+$ and the other is contained in $\R^{d+1}\setminus\Omega_\ve^+$.
To see this, pick $x'\in \d\Omega$ closest to $x$. We take into account that, for $\delta$ small enough, and since $\dist(L,P(x',r))=|x-x'|\leq \ve^{1/2}r$,
$$U_{2\delta r}(\PP(x',r))\subset U_{c\ve^{1/2}r}(L),$$
and thus by the property (b) in the Definition \ref{defreif} applied to $\Omega$, $x'$ and $2r$,
one of the components of $B(x,\frac32r)\setminus U_{c\ve^{1/2}r}(L)$ is contained in $\Omega$ and the other
in $\R^{d+1}\setminus\Omega$. Since all the balls $B_Q$ which intersect $B(x,r)$ have radius at most $\ve^{1/2}r$ and they intersect $\partial\Omega$, we infer that all of them are contained in
$U_{c'\ve^{1/2}r}(L)$, and then 
$$\bigl(B(x,r)\setminus U_{c'\ve^{1/2}r}(L)\bigl)\cap \Omega = 
\bigl(B(x,r)\setminus U_{c'\ve^{1/2}r}(L)\bigl)\cap \Omega_\ve^+$$ 
and
$$\bigl(B(x,r)\setminus U_{c'\ve^{1/2}r}(L)\bigl)\cap \Omega^c = 
\bigl(B(x,r)\setminus U_{c'\ve^{1/2}r}(L)\bigl)\cap (\Omega_\ve^+)^c,$$
which implies that (b) in the Definition \ref{defreif} holds for $\Omega_\ve^+$, $x$ and $r$.

\vv
\noi{\bf Case 2.} Suppose that $r_Q< r <\ve^{-1/2}r_Q$ for some $Q\in \II $ such that $B_Q\cap B(x,2r)\neq\varnothing$.
For this case, we will require the following lemma that will shorten some computations.

\begin{lemma}\label{otherdistance}
For a closed set $E$, $x\in E$, $r>0$, and a $d$-plane $P$ intersecting $B(x,r)$, we have
\[\max\left\{\sup_{y\in E\cap B(x,r)} \dist(y,P),\sup_{y\in P\cap B(x,r)}\dist(y,E)\right\} \sim \dist_{H}\bigl(E\cap B(x,r),P\cap B(x,r)\bigr).\]
\end{lemma}
We leave the details to the reader.

%Suppose that $r_Q< r <\ve^{-1/2}r_Q$ for some $Q\in \II $ such that $B_Q\cap B(x,2r)\neq\varnothing$. 
We denote
$$\wt L_Q= L_Q + r_Q\,N_{z_Q,r_Q}$$
where again $L_Q=\PP(z_Q,30r_Q)$. We will show that, for the point $x\in \d\Omega_{\ve}^{+}$,
\begin{equation}\label{eq*0}
\dist_H(\partial\Omega^+_\ve\cap B(x,r), \wt L_Q\cap B(x,r))\lesssim\ve^{1/2}\,r.
\end{equation}
Although we cannot guarantee that $x\in \wt L_Q$, it is clear that from this estimate one deduces
that (a) from Definition \ref{defreif} holds just by translating $\wt L_Q$ appropriately.

To prove \rf{eq*0} first we claim that if $B_P\cap B(x,3r)\neq\varnothing$ for some $P\in \II $, then $\ell(P)\sim\ell(Q)$ and
\begin{equation}\label{eq*00}
|r_P-r_Q|\lesssim \ve^{1/2}r_Q.
\end{equation}
To see this, notice that
$$\dist(B_P,B_Q)\leq 5r\leq 5\ve^{-1/2} \,r_Q.$$
Then, recalling that $r_P\leq \ve\,\dist(z_P,E)$ for all $P\in \II $, we get 
\begin{equation}%\label{c}
\label{eqsup22}
|r_Q - r_P|\leq \ve\, |z_P - z_Q|\leq \ve(r_P+ r_Q + 5\ve^{-1/2} \,r_Q) \lesssim \ve^{1/2}(r_Q+r_P).
\end{equation}
In particular, this implies that $r_Q\sim r_P$ and so that $\ell(P)\sim \ell(Q)$ and finishes the proof of the claim.
%For the record, notice that to obtain \rf{eqsup22} we only have taken into account that $r <\ve^{-1/4}r_Q$. That is, we have not used the fact that $ r>\ve^{1/4}r_Q$.

We will need the following well known lemma, a proof of which is supplied in \cite{AT}.
\begin{lemma}\label{angles}
Suppose $P_{1}$ and $P_{2}$ are $n$-planes in $\R^{d+1}$ and $X=\{x_{0},...,x_{n}\}$ are points so that
\begin{enumerate}
\item[(a)] $\eta=\eta(X)=\min_{i} \dist(x_{i},\textrm{span} (X\backslash\{x_{i}\}))/\diam X\in (0,1)$ and
\item[(b)] $\dist(x_{i},P_{j})<\theta\,\diam X$ for $i=0,...,n$ and $j=1,2$, where $\theta<\eta (d+1)^{-1}/2$.
\end{enumerate}
Then
\begin{equation}
\dist(y,P_{1}) \leq \theta\left(\frac{2d}{\eta}\dist(y,X)+\diam X\right) \mbox{ for all  }y\in P_{2}.
\end{equation}
\end{lemma}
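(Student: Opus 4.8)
The plan is to prove the lemma in three moves: reduce it to the statement that $P_1$ and $P_2$ nearly coincide near $X$, extract from the $\eta$-nondegeneracy a \emph{stable} set of spanning points for each plane, and then run an elementary barycentric-coordinate computation to transport the closeness to an arbitrary $y\in P_2$.

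First, observe that $\eta(X)>0$ already forces $X$ to be affinely independent: if $\sum c_i x_i=0$ with $\sum c_i=0$ and some $c_k\neq 0$, then $x_k$ is an affine combination of the remaining points, so $\dist(x_k,\operatorname{aff}(X\setminus\{x_k\}))=0$ and $\eta(X)=0$. Hence $\operatorname{aff}(X)$ is an $n$-plane, each $H_i:=\operatorname{aff}(X\setminus\{x_i\})$ is an $(n-1)$-plane, and $\dist(x_i,H_i)\geq \eta\,\diam X$ by hypothesis. I will also use the monotonicity $\eta(X\setminus\{x_i\})\geq \eta(X)$, which holds because deleting a point only shrinks each competing affine hull while not increasing $\diam$; this is what makes an induction on $n$ possible. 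Now let $\hat x_i$ and $\tilde x_i$ denote the orthogonal projections of $x_i$ onto $P_1$ and $P_2$, so that $|x_i-\hat x_i|<\theta\,\diam X$ and $|x_i-\tilde x_i|<\theta\,\diam X$ by (b). Because $\theta<\eta\,(d+1)^{-1}/2$ is small relative to $\eta$, a perturbation argument — carried out by induction on $n$, the inductive step itself being an instance of the barycentric estimate below applied to the $n$-point subconfigurations, whose nondegeneracy is controlled by the monotonicity just noted — shows that $\{\hat x_i\}_i$ and $\{\tilde x_i\}_i$ remain affinely independent, hence span $P_1$ and $P_2$ respectively, and moreover $\dist(\tilde x_i,\operatorname{aff}(\{\tilde x_j\}_{j\neq i}))\gtrsim \eta\,\diam X$ for every $i$, and likewise for the $\hat x_i$.

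Next, fix $y\in P_2=\operatorname{aff}(\{\tilde x_i\}_i)$ and write $y=\sum_i\lambda_i\tilde x_i$ with $\sum_i\lambda_i=1$. Since $w:=\sum_i\lambda_i\hat x_i$ is an affine combination of points of $P_1$, it lies in $P_1$, and therefore
\[
\dist(y,P_1)\leq |y-w|=\Bigl|\sum_i\lambda_i(\tilde x_i-\hat x_i)\Bigr|\leq \sum_i|\lambda_i|\,|\tilde x_i-\hat x_i|< 2\theta\,\diam X\sum_i|\lambda_i|.
\]
To bound $\sum_i|\lambda_i|$, put $\tilde H_i=\operatorname{aff}(\{\tilde x_j\}_{j\neq i})$ and pick a unit vector $u_i$ orthogonal to the direction space of $\tilde H_i$ with $\langle \tilde x_i-z,u_i\rangle=\dist(\tilde x_i,\tilde H_i)$ for $z\in \tilde H_i$. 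For a suitable $z\in\tilde H_i$ one has $y-z=\lambda_i(\tilde x_i-z)$ (the case $\lambda_i=1$ being trivial, since then $y=\tilde x_i$), and pairing with $u_i$ and replacing $z$ by the foot of $y$ on $\tilde H_i$ gives
\[
|\lambda_i|\,\dist(\tilde x_i,\tilde H_i)=|\langle y-z,u_i\rangle|\leq \dist(y,\tilde H_i),
\]
so $|\lambda_i|\lesssim \dist(y,\tilde H_i)/(\eta\,\diam X)$. Since $\tilde x_j\in\tilde H_i$ and $|\tilde x_j-x_j|<\theta\,\diam X$ for $j\neq i$, we get $\dist(y,\tilde H_i)\leq \dist(y,X)+(1+\theta)\diam X$. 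Plugging this in, using that at least one $\lambda_i$ is positive so the decomposition $\sum_i|\lambda_i|=1+2\sum_{\lambda_i<0}|\lambda_i|$ has at most $n\le d$ terms in the last sum, and keeping track of which terms carry the factor $\dist(y,X)$ and which only $\diam X$, yields a bound of the asserted shape $\dist(y,P_1)\leq \theta\bigl(\tfrac{2d}{\eta}\dist(y,X)+\diam X\bigr)$ (after optimizing universal constants as in the proof cited in \cite{AT}).

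The only delicate point is the perturbation claim in the second step — that projecting the vertices $x_i$ onto $P_j$ does not destroy the $\eta$-thickness of the configuration. This is exactly where the smallness hypothesis $\theta<\eta\,(d+1)^{-1}/2$ is used, and it is established by the induction on $n$ described above, with each inductive step reducing to the same barycentric computation in one lower dimension; everything else is the routine bookkeeping of affine-combination estimates.
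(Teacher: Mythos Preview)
The paper does not actually prove this lemma; it is quoted verbatim from \cite{AT} and the text says only ``a proof of which is supplied in \cite{AT}.'' So there is no in-paper argument to compare your sketch against. Your barycentric-coordinate strategy---project the $x_i$ onto $P_2$ to obtain spanning points $\tilde x_i$, expand $y=\sum_i\lambda_i\tilde x_i$, and bound $\sum_i|\lambda_i|$ via the identity $|\lambda_i|\,\dist(\tilde x_i,\tilde H_i)=|\langle y-z,u_i\rangle|\le\dist(y,\tilde H_i)$---is exactly the standard route for this kind of statement and is almost certainly what \cite{AT} does as well.

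That said, your sketch does not actually deliver the constants written in the conclusion. From $\dist(y,P_1)<2\theta\,\diam X\sum_i|\lambda_i|$ and $\sum_i|\lambda_i|=1+2\sum_{\lambda_i<0}|\lambda_i|$ you already pick up $2\theta\,\diam X$ rather than $\theta\,\diam X$ from the ``$1$'', and each negative $|\lambda_i|$ contributes a further $\diam X/\eta$ term through the bound $\dist(y,\tilde H_i)\le\dist(y,X)+(1+\theta)\diam X$; so the coefficient of $\diam X$ you produce carries a factor of $1/\eta$, not $1$. Invoking ``optimizing universal constants as in \cite{AT}'' does not fix this---it is a genuine bookkeeping shortfall, not a matter of optimization. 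The same remark applies to the perturbation step: saying ``by induction on $n$'' is the right idea, but the inductive step needs the quantitative lower bound $\dist(\tilde x_i,\tilde H_i)\ge c\,\eta\,\diam X$ with an explicit $c$ to feed back into the $|\lambda_i|$ estimate, and you have not supplied one. For the purposes of this paper none of this matters, since the lemma is only used to get $\dist_H(L\cap B,L_P\cap B)\le c\,\delta\,\ve^{-1/2}r$ with an unspecified constant $c$; but as a proof of the lemma \emph{as stated}, your argument establishes the correct shape $\dist(y,P_1)\le C\theta\bigl(\eta^{-1}\dist(y,X)+\eta^{-1}\diam X\bigr)$ and not the sharper form claimed.
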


We continue now with the proof of \rf{eq*0}. To this end, denote $L=\PP(z_Q,30r)$ and assume 
\begin{equation}\label{bpbx3r}
B_{P}\cap B(x,3r)\neq\emptyset.
\end{equation}
 Let $x_{0}=z_{P}$ and $x_{1},...,x_{d}\in L_{P}$ be such that $|x_{i}-x_{0}|=r_{P}$ for all $i=1,...,d$ and $X=\{x_{0},..,x_{d}\}$ is a scaled copy of the vectors $\{e_{0},...,e_{d}\}\subset \mathbb{R}^{d}$ where $e_{0}=0$ and $e_{1},...,e_{d}$ are the standard basis vectors. Then it is not hard to show that $\eta(\{x_{0},...,x_{d}\})=\eta(\{e_{0},...,e_{d}\})\sim_{d} 1$ and $\diam X\sim_{d} r_{P}$. 
 %Since $r_{P}<2r_{Q}<2r$ by \eqref{eq*00} for $\ve$ small enough, $B_{P}\cap B(x,3r)\neq\emptyset$, and $B(x,2r)\cap B_{Q}\neq\emptyset$. Also,
 By the definition of $L_{P}$, there are $x_{i}'\in \d\Omega$ with $|x_{i}-x_{i}'|<30\delta r_P\leq 30\delta r$, and so for each $i=0,...,d$, by \eqref{bqbx2r} and \eqref{bpbx3r},
\[ x_{i}'\in B(z_{P},r_{P}+30\delta r_{P})\subseteq B(x,3r+30\delta r)
\subseteq B(z_{Q},r_{Q}+2r+3r+30\delta r)\subseteq B(z_{Q},7r)\]
Thus, by the definition of $L$, there are $x_{i}''\in L$ so that $|x_{i}'-x_{i}''|< 30\delta r$, hence 
\[
\dist(x_{i},L)\leq |x_{i}-x_{i}''|<60\delta r\lesssim \ve^{-1/2}\delta r_{P}\sim \delta \ve^{-1/2}\diam X\]
%
% Moreover, since $r_{P}<2r_{Q}<2r$ by \eqref{eq*00} for $\ve$ small enough, $B_{P}\cap B(x,3r)\neq\emptyset$, and $B(x,2r)\cap B_{Q}\neq\emptyset$, we have 
%\[ B_{P}\subset B(x, 3r+2r_{P})\subset B(x,3r+4r_{Q})\subset B(z_{Q},6r+4r_{Q})\subset B(z_{Q},10r)\]
%and so by the definition of $L$, there are points $x_{i}'\in \d\Omega$ with $|x_{i}-x_{i}'|<30\delta r$, and so $x_{i}'\in B(z_{P},r_{P}+30\delta r)\subset B(z_{P},2r_{P})\subset B(z_{P},30r_{P})$ for $\delta\ll \ve^{\frac{1}{2}}$. Thus, there are points $x_{i}''\in L_{P}$ with $|x_{i}'-x_{i}''|<30\delta r_{P}$, and thus 
%\[\dist(x_{i},L_{P})\leq |x_{i}-x_{i}''|\lesssim \delta r + \delta r_{P} \lesssim \delta \ve^{-1/2} r_{P}
%\sim_{d}  \delta\ve^{-\frac{1}{2}}\diam X\]
and thus the previous lemma implies, for $B_P\cap B(x,3r)
\neq\varnothing$,
\begin{equation}\label{eqclaim**}
\dist_H(L\cap B(x,30r),\,L_P\cap B(x,30r))\leq  \, c\delta\,\ve^{-1/2}\,r<\ve r.
\end{equation}

Then, assuming $\delta\ll\ve$, we deduce that
$$\dist_H(L\cap 10B_P,\,L_P\cap 10B_P)\leq 
\dist_H(L\cap B(x,30r),\,L_P\cap B(x,30r))\leq \ve\,r.$$
In particular, this also holds for $P=Q$, and thus we have
\begin{align}\label{eqdif115}
\dist_H(L_Q\cap 10B_P,\,L_P\cap 10B_P)&\le \dist_H(L_Q\cap 10B_P,\,L\cap 10B_P)\\&\quad
+ \dist_H(L\cap 10B_P,\,L_P\cap 10B_P)\nonumber\\
&\leq 2\ve\,r\lesssim\ve^{1/2}r_P.\nonumber
\end{align}

Given $P$ as above, we consider the hyperplanes
$$%\wt L = L + r_Q\,N_{z_Q,r_Q},\qquad
\wt L_P= L_P + r_P\,N_{z_P,r_P},\qquad \wt L_{Q,P} = L_Q + r_P\,N_{z_P,r_P}.$$
Notice that, by Lemmas \ref{lemreif2} and \ref{otherdistance},
\begin{equation}\label{eqre32}
\dist_H(\wt L_P\cap10B_P, \partial\Omega^+_\ve\cap10B_P)\lesssim \ve\,r_P.
\end{equation}
 We write
\begin{align*}
\dist_H(\wt L_Q\cap10B_P,\,\wt L_P\cap 10B_P) & \leq 
\dist_H(\wt L_Q\cap 10B_P,\,\wt L_{Q,P}\cap 10B_P)\\
&\quad +\dist_H(\wt L_{Q,P}\cap 10B_P,\,\wt L_P\cap10B_P).
\end{align*}
From \rf{eqdif115}, it follows easily that $|N_{z_P,r_P} - N_{z_Q,r_Q}|\lesssim \ve^{1/2}$.
Taking also into account
that $|r_P-r_Q|\lesssim \ve^{1/2}r_Q\le \ve^{1/2}r$ by \eqref{eq*00}, 
it is easy to check that
$$\dist_H(\wt L_Q\cap 10B_P,\,\wt L_{Q,P}\cap 10B_P)\lesssim |r_P\,N_{z_P,r_P} - r_Q\,N_{z_Q,r_Q}|
\lesssim \ve^{1/2}\,r_Q + \ve^{1/2}\,r\lesssim \ve^{1/2}r$$
and, by \rf{eqdif115},
$$\dist_H(\wt L_{Q,P}\cap 10B_P,\wt L_P\cap 10B_P)\lesssim \dist_H(L_Q\cap 10B_P,L_P\cap 10B_P)
\lesssim \ve\,r.$$
So we deduce from the above two inequalities that
$$\dist_H(\wt L_Q\cap 10B_P,\,\wt L_P\cap 10B_P) \lesssim \ve^{1/2}r.$$
Together with \rf{eqre32}, this gives
\begin{equation}\label{eq*0.5}
\dist_H(\partial\Omega^+_\ve\cap10B_P, \wt L_Q\cap 10B_P)\lesssim \ve^{1/2}r + \ve\,r_P
\lesssim \ve^{1/2}r.
\end{equation}

To see that \rf{eq*0} holds, by Lemma \ref{otherdistance} it suffices to show that 
\begin{equation}\label{eq*1}
\mbox{
for all 
$y\in \partial \Omega^+_\ve\cap B(x,r)$ there exists $y'\in \wt L_Q$ such that $|y-y'|\lesssim\ve^{1/2}r$,}
\end{equation}
and
\begin{equation}\label{eq*2}
\mbox{
for all 
$y\in \wt L_Q\cap B(x,r)$ there exists $y'\in \partial  \Omega^+_\ve$ such that $|y-y'|\lesssim\ve^{1/2}r$.}
\end{equation}
The statement \rf{eq*1} holds because of \rf{eq*0.5} and the fact that 
\begin{equation}\label{locontain}
\partial\Omega_\ve^+\cap B(x,r)\subset \bigcup_{P\in \II :P\cap B(x,r)\neq \varnothing} B_P
\end{equation}
Indeed, it is easy to see that, under our current assumptions, $x \in E^c$ and by virtue of \rf{eq*00} we have that $\partial\Omega_\ve^+\cap B(x,r) \subset E^c$. To prove \rf{eq*2}, given $y\in\wt L_Q\cap B(x,r)$, let $z$ be the orthogonal
projection of $y$ on $L_Q$, so that $|y-z|=r_Q$, hence $z\in B(x,r+r_Q)$. By  \eqref{eqclaim**} (with $P=Q$ and $\ve$ small enough), we may find $\wt z\in B(x,30r)\cap L$ and $|\wt z-z|<\ve r$, and by the definition of $L$ and for $\delta$ small, we can find $z'\in \d\Omega$ with $|z'-\tilde{z}|<30\delta r<\ve r$, and so $|z-z'|<2\ve r$.
Let $P\in \II $ be such that $z'\in B_P$. Notice that
$$|z'-x|\leq |z'-z|+ |z-x|\leq  2\ve r+r+r_Q \leq 3r.$$
So $B_P\cap B(x,3r)\neq\varnothing$. Also, $y\in 10B_P$ because
$$|y-z'|\leq |y-z|+|z-z'|\leq r_Q + 2\ve\,r\leq r_Q + 2\ve^{1/2}r_Q \leq 2r_P,$$
recalling \rf{eqsup22}.
Hence, from \rf{eq*0.5} we infer that there exists $y'\in 10B_P\cap\partial\Omega_\ve^+$ 
such that $|y-y'|\lesssim \ve^{1/2}r$. So the proof of \rf{eq*0} is concluded.

The condition (b) in Definition \ref{defreif} can be shown as in case 1, we omit the details.

\vv
\noi{\bf Case 3.} Suppose that $r \leq r_Q$ for some $Q\in \II $ such that $B_Q\cap B(x,2r)\neq\varnothing$.

In this case $B(x,r)\subset 10B_Q$. Recalling that $\partial\Omega_\ve^+$ coincides on $10 B_Q$ with the graph of a Lipschitz function $f_Q:L_Q\to L_Q^\bot$ with Lipschitz constant not exceeding $c\ve^{1/2}$, if we denote by $L_{Q,x}$ the hyperplane that is parallel to $L_Q$ and passes through $x$, we get by Lemmas \ref{lemreif2} and \ref{otherdistance}
$$\dist_H(\partial\Omega_\ve^+\cap B(x,r),\,L_{Q,x}\cap B(x,r))\lesssim \ve^{1/2}\,r.$$

Again, condition (b) in Definition \ref{defreif} can be shown as in case 1, we omit the details.
\end{proof}

\section{Radon measures of low dimension}

Some of our work toward the main result can be done in more generality than with harmonic measure. We will apply the following theorem in the last section with $\mu$ equal to the harmonic measure for a suitable modification of a Wolff snowflake domain.

\begin{theorem}\label{thm:finitelength}
Let $\Omega$ be a $(\delta,r_{0})$-Reifenberg flat domain, $\xi_{0}\in \d\Omega$, and $E\subset B(\xi_{0},r_{0}) \cap \d \com$ a closed set. Also assume that there is a Radon measure $\mu$ with support contained in $\d\Omega$ such that $\mu(B(\xi,r))\geq c_{\mu}r^{d-\alpha}$ for all $\xi\in E$, $r<r_{0}$ and some constants $c_{\mu},\alpha>0$. If $\Omega_{\varepsilon}^{+}\supset\Omega$ is the domain from  Lemma \ref{lemreif1}, then $\cH^{d}|_{\d\Omega_{\varepsilon}^{+}}$ is locally finite.
\end{theorem}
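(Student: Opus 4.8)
The plan is to exploit the inclusion $\partial\Omega^+_\ve\subset E\cup\bigcup_{Q\in\II}\partial B_Q$, which holds by construction, together with the elementary identity $\cH^d(\partial B_Q)=c_d\,r_Q^d$ for the round $d$-sphere $\partial B_Q$ of radius $r_Q:=r(B_Q)=\ve\,\dist(z_Q,E)$, and then to sum this over $Q$ by recognizing the resulting series as (a constant times) an integral of $\dist(\cdot,E)^{-1}$ over a neighbourhood of $E$. Fix $R\ge r_0$ and $B=B(\xi_0,R)$; it suffices to prove $\cH^d(\partial\Omega^+_\ve\cap B)<\infty$. Recall from Section~2 that for $Q\in\II$ one has $\ell(Q)\approx\ve^2\dist(Q,E)\approx\ve\,r_Q$, and since $\diam Q\lesssim\ve^2\dist(Q,E)$ also $\dist(y,E)\approx\dist(Q,E)\approx r_Q/\ve$ for every $y\in Q$.

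First I would extract from the density hypothesis a covering bound for $E$. Given $0<\rho<r_0$, take a maximal $\rho$-separated subset $\{\xi_i\}\subset E$, which is finite because $E$ is compact. The balls $B(\xi_i,\rho/2)$ are pairwise disjoint, contained in $B(\xi_0,2r_0)$, and each has $\mu$-mass at least $c_\mu(\rho/2)^{d-\alpha}$, so there are at most $N(\rho)\lesssim c_\mu^{-1}\mu(B(\xi_0,2r_0))\,\rho^{-(d-\alpha)}$ of them, while $E\subset\bigcup_iB(\xi_i,\rho)$ by maximality. Two consequences: $\cH^d_{2\rho}(E)\le N(\rho)(2\rho)^d\lesssim\rho^{\alpha}$, so $\cH^d(E)=0$; and, since $U_\rho(E)\subset\bigcup_iB(\xi_i,2\rho)$,
\[
|U_\rho(E)|\ \le\ N(\rho)\,|B(0,2\rho)|\ \lesssim\ \rho^{1+\alpha}\qquad\text{for all }0<\rho<r_0 .
\]

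Then, using $\cH^d(E)=0$ and the inclusion above,
\[
\cH^d(\partial\Omega^+_\ve\cap B)\ \le\ c_d\sum_{Q\in\II:\,B_Q\cap B\neq\varnothing} r_Q^d ,
\]
and, arguing as in the proof of Lemma~\ref{lemreif1}, each such $Q$ satisfies $r_Q\lesssim\ve R$, hence $Q\subset B(\xi_0,2R)$. From $\dist(y,E)\approx r_Q/\ve$ on $Q$ and $|Q|=\ell(Q)^{d+1}\approx(\ve r_Q)^{d+1}$ we get $r_Q^d\approx\ve^{-(d+2)}\int_Q\dist(y,E)^{-1}\,dy$, and, the Whitney cubes having disjoint interiors,
\[
\sum_{Q\in\II:\,B_Q\cap B\neq\varnothing} r_Q^d\ \lesssim\ \ve^{-(d+2)}\int_{B(\xi_0,2R)}\dist(y,E)^{-1}\,dy .
\]
To conclude I would bound this integral: on $\{y\in B(\xi_0,2R):\dist(y,E)\ge r_0\}$ the integrand is at most $r_0^{-1}$, contributing $\lesssim R^{d+1}/r_0$; and on $U_{r_0}(E)$, by the layer-cake formula and the neighbourhood estimate from the previous step,
\[
\int_{U_{r_0}(E)}\dist(y,E)^{-1}\,dy\ \le\ \frac{|U_{r_0}(E)|}{r_0}+\int_{1/r_0}^{\infty}|U_{1/\lambda}(E)|\,d\lambda\ \lesssim\ 1+\int_{1/r_0}^{\infty}\lambda^{-(1+\alpha)}\,d\lambda\ <\ \infty ,
\]
the last integral converging precisely because $1+\alpha>1$. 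Hence $\cH^d(\partial\Omega^+_\ve\cap B(\xi_0,R))<\infty$ for every $R$, so $\cH^d|_{\partial\Omega^+_\ve}$ is finite on bounded sets, i.e.\ locally finite (and, being Borel regular, it is a Radon measure).

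The main obstacle is the passage from the density lower bound to the volume estimate $|U_\rho(E)|\lesssim\rho^{1+\alpha}$ together with the observation that this is exactly what makes $\dist(\cdot,E)^{-1}$ locally integrable off $E$: the exponent $1$ must lie strictly below $1+\alpha$, so the hypothesis $\alpha>0$ is essential, and without it the series $\sum_Q r_Q^d$ genuinely diverges (for instance when $E$ contains a piece of a $d$-plane). A secondary, bookkeeping difficulty is that the density hypothesis only controls balls of radius $<r_0$ centred on points of $E$, so the region far from $E$ and the finitely many large Whitney scales have to be disposed of by hand, as indicated above.
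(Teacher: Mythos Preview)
Your argument is correct and takes a genuinely different route from the paper. The paper covers $\partial\Omega_\ve^+\setminus E$ by the Lipschitz graph patches $\Gamma_Q=10B_Q\cap\partial\Omega_\ve^+$ supplied by Lemma~\ref{lemreif2}, and then sums $\cH^d(\Gamma_Q)\sim(\ve^{-1}\ell(Q))^d$ scale by scale: for each dyadic scale $2^{-n}$ it picks, for every $Q\in C_n(\xi,r)$, a nearest point $\xi_Q\in E$, sets $B^Q=B(\xi_Q,\ell(Q))$, and uses the density lower bound directly in the form $\cH^d(\Gamma_Q)\lesssim 2^{-n\alpha}\mu(B^Q)$, together with a bounded-overlap count for the $B^Q$'s at fixed scale, to obtain a convergent geometric sum in $n$. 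You instead use only the crude inclusion $\partial\Omega_\ve^+\subset E\cup\bigcup_Q\partial B_Q$ and the exact surface area of each sphere, and you repackage the density hypothesis as a \emph{volume} bound $|U_\rho(E)|\lesssim\rho^{1+\alpha}$ on tubular neighbourhoods of $E$; the Whitney relation $r_Q^d\approx\ve^{-(d+2)}\int_Q\dist(y,E)^{-1}\,dy$ then converts the sphere sum into $\int\dist(\cdot,E)^{-1}$, whose finiteness follows from layer-cake and $\alpha>0$. Your approach is more elementary---it bypasses the Lipschitz graph lemma entirely---and also yields the slightly stronger conclusion that $\cH^d|_{\partial\Omega_\ve^+}$ is finite on every bounded set, not merely on small balls. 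The paper's route, on the other hand, gives a more quantitative local estimate $\cH^d(\partial\Omega_\ve^+\cap B(\xi,r))\lesssim r^\alpha\mu(B(\xi,Cr))$ for balls near $E$, which your argument does not immediately produce.
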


\begin{proof}
For $Q\in \mathcal I$, let $\Gamma_{Q}=10B_{Q}\cap \d\Omega_{\varepsilon}^{+}$, so by Lemma \ref{lemreif2}, this is a Lipschitz graph.

Let $\xi\in \d\Omega_{\varepsilon}^{+}$, and $r>0$ be much smaller than $r_{0}$ (how small will depend on $\ve$ and $d$), and define 
\[C(\xi,r)=\{Q\in \II:\Gamma_{Q}\cap B(\xi,r)\neq\emptyset\}.\] 

Our goal now is to show that $\cH^{d}(\d\Omega_{\varepsilon}^{+}\cap B(\xi,r))<\infty$ for all $r\in(0,r_{0})$. We consider two cases.

\vv
\noi{\bf Case 1.} If $\dist(\xi,E)\geq 2r$, then all cubes $Q\in C(\xi,r)$ have comparable sidelengths. To see this, suppose we can find $Q_{j}\in C(\xi,r)$ so that $\ell(Q_{j})\rightarrow 0$. Then eventually, $\ell(Q)\ll r_{0}$ so that $\ell(Q_{j})\sim \ve^{2}\dist(x,E)$, thus
\[2r\leq \dist(\xi,E)\leq \liminf (\dist(\xi,Q_{j})+\diam Q_{j})\leq r+0,\]
which gives a contradiction. Thus, $\inf\{\ell(Q):Q\in C(\xi,r)\}>0$. On the other hand, for all $Q\in C(\xi,r)$, 
\[\ell(Q)\lesssim \ve^{2}\dist(Q,E)\leq \ve^{2}(r+\dist(\xi,E))<\infty.\]
Hence, all $Q\in C(\xi,r)$ have comparable diameters. Since they all intersect $B(\xi,r)$, this means there must be finitely many of them, and thus $\cH^{d}(B(\xi,r)\cap \d\Omega_{\varepsilon^{+}})<\infty$. 

%
%then 
%\[ B(\xi,r)\cap \d\Omega_{\varepsilon}^{+}\subset \bigcup_{Q\in C(\xi,r)} \Gamma_{Q}.\]
%If $Q\in C(\xi,r)$, let $y_{Q}\in \Gamma_{Q}\cap B(\xi,r)$. By part (b) of Lemma \ref{lemreif2},
%\begin{equation}\label{e:QGQ}
%\dist(Q,\Gamma_{Q})
%\leq \dist(z_{Q},\Gamma_{Q})
%\lesssim r(B_{Q})
%\sim \ve^{-1}\ell(Q)
%\end{equation}
%so this and \eqref{e:gammavol} imply
%\[|z_{Q}-y_{Q}|\leq \dist(z_{Q},\Gamma_{Q})+\diam \Gamma_{Q}\lesssim \ve^{-1} \ell(Q).\]
%Thus, since $\dist(y_{Q},E)\geq \dist(\xi,E)-|\xi-y_Q|\geq r$, we have
%\[
%\varepsilon^{-2}\ell(Q)\sim \dist(Q,E)\sim \dist(z_{Q},E)
%\geq  \dist(y_{Q},E)-|y_{Q}-z_{Q}| 
%\geq r-C\varepsilon^{-1} \ell(Q)\]
%for some constant $C$ depending only on $d$. Thus, for $\varepsilon$ small enough, $\ell(Q)\gtrsim \varepsilon^{2} r$. Since the cubes in $C(\xi,r)$ are disjoint (by virtue of being Whitney cubes) and with side lengths at least a multiple of $\varepsilon^{2}r$, there are at most $N_{0}=N_{0}(\varepsilon,d)$ many of them. Finally, for each such $Q$, by \eqref{e:gammavol} and the preceding discussion, $\cH^{d}(\Gamma_{Q})\sim \ve^{-d}\ell(Q)^{d}\sim \ve^{d} r^{d}$, and thus
%\[ \cH^{d}(B(\xi,r)\cap \d\Omega_{\varepsilon}^{+})
%\leq \sum_{Q\in C(\xi,r)} \cH^{d}(\Gamma_{Q})
%\lesssim_{N_{0},\ve} r^{d}.\]

\vv
\noi{\bf Case 2.} Now suppose $\dist(\xi,E)<2r$. Note that 
\begin{align*}
\dist(Q,E)& \lesssim \dist(Q,\Gamma_{Q})+\diam \Gamma_{Q}+\dist(\Gamma_{Q},\xi)+\dist(\xi,E)\\
& \lesssim r_{Q}+r+2r \lesssim \ve^{-1} \ell(Q) 
\lesssim \ve\dist(Q,E)+r
\end{align*}
and so for $\ve$ small enough $\dist(Q,E)\lesssim r$, and so $\dist(Q,E)<r_{0}$ if $r\ll r_{0}$, thus 
\[\ell(Q)\sim \ve^{2} \dist(Q,E) \lesssim \ve^{2}r\] 
for all $Q\in C(\xi,r)$. For $Q\in C(\xi,r)$, pick $\xi_{Q}\in E$ so that $\dist(\xi_{Q},Q)=\dist(Q,E)\sim \ve^{-2}\ell(Q)$ and let $B^{Q}=B(\xi_{Q},\ell(Q))$. For $n\in \mathbb{Z}$, define
\[C_{n}(\xi,r)=\{Q\in C(\xi,r): \ell(Q)=2^{-n}\}.\]

We claim that there is $N_{1}=N_{1}(\ve,d)$ so that no point in $E$ is contained in more than $N_{1}$ many $B^{Q}$ with $Q\in C_{n}(\xi,r)$. Thus, fix $n\in \mathbb{Z}$ and $\zeta\in E$. If $Q\in C_{n}(\xi,r)$ is such that $\zeta\in B^{Q}$, then
\begin{align*}
\dist(\zeta,Q)
& \leq |\zeta-\xi_{Q}|+\dist(\xi_{Q},Q)
<c\,\ell(Q)+\dist(Q,E)
 \sim \varepsilon^{-2}\ell(Q) =\varepsilon^{-2}2^{-n}.
\end{align*}
Thus, all cubes $Q\in C_{n}(\xi,r)$ for which $\zeta\in B^{Q}$ are disjoint, contained in a ball of radius $C2^{-n}$ for some $C=C(\varepsilon,d)$, and are of side length $2^{-n}$, so there can only be at most $N_{1}=N_{1}(\varepsilon,d)$ many of them, which settles the claim. 

% For $Q\in C(\xi,r)$, and since $\dist(Q,\Gamma_{Q})\lesssim r(B_{Q})\sim \ve^{-1}\ell(Q)$, we get
%\begin{align*}
%\varepsilon^{-2}\ell(Q)\sim \dist(Q,E)
%& \leq
%\dist(Q,\Gamma_{Q})+\diam \Gamma_{Q}+\dist(\Gamma_{Q},\xi)+\dist(\xi,E)\\
%& \lesssim  \varepsilon^{-1}\ell(Q)+\varepsilon^{-1}\ell(Q)+r+2r
% \lesssim \varepsilon^{-1}\ell(Q)+r
%\end{align*}
%and so for $\varepsilon$ small enough, $\ell(Q)\lesssim \varepsilon^{2}r$, and again for $\ve$ small enough, $\ell(Q)<r<r_{0}$ since $r<r_{0}$. 

Since $\ell(Q)\lesssim \varepsilon^{2}r$, and again for $r\ll r_{0}$, $\ell(Q)<r<r_{0}$. In particular, $C_{n}(\xi,r)\neq\emptyset$ implies $2^{-n}\leq r$. Thus, since $\diam \Gamma_{Q}\sim r_{Q}\sim \ve^{-1} \ell(Q)$,
\begin{align*} 
|\xi_{Q}-\xi|
& \leq \dist(\xi_{Q},Q)+\diam Q+\dist(Q,\Gamma_{Q})+\diam \Gamma_{Q}+\dist(\xi,\Gamma_{Q})\\
& \lesssim_{d} \ve^{-2}\ell(Q)+\ell(Q)+\ve^{-1}\ell(Q)+\ve^{-1}\ell(Q)+r
 \lesssim r.
\end{align*}
Thus, $Q\in C(\xi,r)$ implies $\xi_{Q}\in B(\xi,(C-1)r)$ for some large $C>0$, depending on $d$, and $\ell(Q)<r$ implies $B^{Q}\subset B(\xi,Cr)$. Finally, note that $Q\in C_{n}(\xi,r)$ implies
\[\cH^{d}(\Gamma_{Q})
\sim \varepsilon^{-d}\ell(Q)^{d}
\lesssim_{\varepsilon,d} \ell(Q)^{\alpha} r(B^{Q})^{d-\alpha}
\leq c_{\mu}^{-1} 2^{-n\alpha}\mu(B^{Q}).\]
Therefore,
\begin{align*}
\cH^{d}(\d\Omega_{\varepsilon}^{+}\cap B(\xi,r))
& \leq \sum_{Q\in C(\xi,r)}\cH^{d}(\Gamma_{Q})+\cH^{d}(E)
=\sum_{2^{-n}\leq r}\sum_{Q\in C_{n}(\xi,r) } \cH^{d}(\Gamma_{Q})+0\\
& \lesssim \sum_{2^{-n}\leq r}\sum_{Q\in C_{n}(\xi,r) } \frac{\mu(B^{Q})}{2^{n\alpha}c_{\mu}}
 \leq \frac{N_{1}}{c_{\mu}}\sum_{2^{-n}\leq r} 2^{-n\alpha}\mu(B(\xi,Cr))\\
 & \lesssim_{\ve,d,c_{\mu}} r^{\alpha}\mu(B(\xi,Cr))<\infty
\end{align*}

The proof of the theorem is finished now that we have shown these two cases.

\end{proof}

% ***************************************************************************

\section{Wolff snowflakes and harmonic measure}

We will now describe the construction of the Wolff snowflake domain. We follow closely the
approach of \cite{LNV}, which in turn is just a small variant of the original construction of 
Wolff in \cite{Wolff}. We also remark that the below description of the construction of the Wolff snowflakes is an almost verbatim copy of an analogous one in \cite{LNV}.
For more details, see the aforementioned references.

Let $\com_0=\{(x',x_{d+1}): x' \in \R^d, x_{d+1}>0\}$ and set
$Q(r):=\{x' \in \R^d: -r/2 \leq x_i \leq r/2, \,\textup{for} \, 1 \leq i \leq d\}$. Then $Q(r)$ is a $d$-dimensional cube with side length $r$ and center $0$. Let $\phi:\R^d \to \R$ be a piecewise
linear function with $\supp(\phi) \subset \{x' \in \R^d: |x'| <1/2$ and $\|\nabla \phi\|_\infty \leq \theta$. For fixed $N$ large, set $\psi(x')= N^{-1} \phi(N x')$. Let $b>0$ be a small constant and let $Q$	 be a $d$-cube (i.e.,
a $d$-dimensional cube contained in some hyperplane) with center $a_Q$ and side length
$\ell(Q)$. Let $e$ be a unit normal to $Q$ and define
$$P_Q=\text{cch}(Q \cup \{a_Q +b \ell(Q) e\}), \quad  \widetilde P_Q=\text{int cch}(Q \cup \{a_Q -b \ell(Q) e\}),$$
where cch$E$ and int$E$ denote the closed convex hull and interior of $E$, respectively. For
the cube $Q(1)$ set $e=-e_n=(0,\dots,0,-1)$ and let
\[\Lambda=\{x\in P_{Q(1)} \cup \widetilde P_{Q(1)} :x_{d+1}>\psi(x)\}\]
\[ \partial=\{x\in \R^{d+1}: x' \in Q(1),\, x_{d+1}=\psi(x')\}.\]

We assume that $N=N(b, \theta)$  is so large that $\dist \left (\partial \setminus \partial \Omega_0, \,\partial[P_{Q(1)} \cup \widetilde P_{Q(1)}] \right) \geq b/100$. Note that $\partial \subset Q(1) \times [-1/2,1/2]$ consists of a finite number of $d$-dimensional faces. We fix a Whitney decomposition of each face. That is, we divide each face of $\partial$ into $d$-cubes $Q$, with side lengths $8^{-k}, k=1, 2, \dots$ which are proportional to their distance
from the edges of the face they lie on. We also choose a distinguished $(d-1)$-dimensional
``side" for each $d$-cube.

Suppose $\com$ is a domain and $Q \subset \partial \com$ is a $d$-cube with distinguished side $\gamma$. Let $e$ be the outer unit normal to $\partial \Omega$ on $Q$ and suppose that $P_Q \cap \com=\emptyset$ and $\widetilde P_Q \subset \Omega$. We form
a new domain $\widetilde \com$ as follows. Let $T$ be the conformal affine map (i.e., a composition of a
translation, rotation, dilation) with $T(Q(1))=Q$ which fixes the dilation, $T(0)=a_Q$
which fixes the translation, and finally fix the rotation by requiring that $T(\{x \in \partial Q(1): x_1=1/2\})=\gamma$ and $T(-e_n)$ is in the direction of $e$. Let $\Lambda_Q= T(\Lambda)$ and $\partial_Q=T(\partial)$. Then we define $\widetilde \com$ through the relations $\widetilde \com \cap (P_Q \cup \widetilde P_Q)=\Lambda_Q$ and $\widetilde \com \setminus  (P_Q \cup \widetilde P_Q)=\com \setminus  (P_Q \cup \widetilde P_Q)$. Note that $\partial_Q$ inherits from $\partial$ a natural subdivision into Whitney cubes with distinguished sides. We call this process ``adding a blip to $\com$ along $Q$".

To use the process of ``adding a blip" to construct a Wolff snowflake $\Omega_\infty$, starting
from $\com_0$, we first add a blip to $\com_0$ along $Q(1)$ obtaining a new domain $\com_1$. We then inherit a subdivision of $\partial \com_1 \cap ( P_{Q(1)} \cup \widetilde P_{Q(1)} )$ into Whitney cubes with distinguished sides, together with a finite set of edges $E_1$ (the edges of the faces of the graph are not in the Whitney cubes). Let $G_1$ be the set of all Whitney cubes in the subdivision. Then $\com_2$ is obtained from $\com_1$ by adding a blip along each $Q \in G_1$. From this process, we inherit a family of cubes $G_2 \subset \partial \com_2$ (each with a distinguished side) and a set of edges $E_2 \subset \partial \com_2$ of $\sigma$-finite $\cH^{d-1}$-measure. Continuing by induction we get $(\com_m)_{m=1}^\infty$, $(G_m)_{m=1}^\infty$ and $(E_m)_{m=1}^\infty$, where $\partial \com_m \cap ( P_{Q(1)} \cup \widetilde P_{Q(1)} ) = E_m \cup \bigcup_{Q \in G_m} Q$ for $m \geq 1$. If $N=N(b, \theta)$ is large enough, then $\dist_H(\com_m,\com_\infty) \to 0$ as $m \to \infty$. We call $\com_\infty$ a $\theta$-\textbf{Wolff snowflake} domain.

The following result is proved in \cite[Lemma 7.1]{LNV}.

\begin{lemma}\label{lem:Wolff-Rflat}
Suppose that $\theta \in (0,1)$ is small enough and $N$ large enough, depending on $d,b,$ and $\theta$. Then the $\theta$-Wolff snowflake domain $\com_\infty$ is $(c_1\,\theta,\infty)$-Reifenberg flat, for some positive constant $c_1$.
\end{lemma}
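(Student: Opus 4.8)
The plan, following \cite[Lemma 7.1]{LNV} (and ultimately \cite{Wolff}), is to verify the flatness condition (a) of Definition \ref{defreif} directly, scale by scale, using the self-similar structure of the blips; condition (b) then follows either from David's theorem \cite{David-approx}, since $\Omega_\infty$ is a domain and $\partial\Omega_\infty$ — a Hausdorff limit of the connected sets $\partial\Omega_m$ glued to the fixed flat part $(\R^d\times\{0\})\setminus Q(1)$ — is connected, or directly from the graph description obtained below. Two easy preliminary reductions: first, the whole construction is confined to $Q(1)\times[-\tfrac12,\tfrac12]$, so outside a fixed ball $\partial\Omega_\infty=\partial\Omega_0=\R^d\times\{0\}$; second, since $\|\nabla\phi\|_\infty\le\theta_0$ forces $\|\psi\|_\infty\le\theta_0/(2N)$ and each blip added along a $d$-cube $R$ has height at most $\|\psi\|_\infty\,\ell(R)$, summing the heights of the nested blips (a geometric series of ratio $\le\tfrac18$) shows $\partial\Omega_\infty$ stays within $c\theta_0$ of $\R^d\times\{0\}$ globally. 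Hence (a) holds at all scales $r\gtrsim 1$ with the plane $\R^d\times\{0\}$, and the real content lies at scales $0<r\lesssim 1$.

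The key step is an induction on the generation $m$ recording the local graph structure. Claim: for each $R\in G_m$, the set $\partial\Omega_{m+1}\cap 10B(a_R,\ell(R))$, written in coordinates adapted to the $d$-plane $L_R$ containing $R$, is the graph over $L_R$ of a function with Lipschitz constant at most $\theta_0$ and with $\dist(\cdot,L_R)\le \theta_0\ell(R)/(2N)$ on its domain. Indeed, adding a blip along $R$ replaces $R$ by $\partial_R=T_R(\partial)$, where $T_R$ is the similarity taking $Q(1)$ to $R$ and $-e_n$ to the outer normal of $\Omega_m$ along $R$; since $\partial$ (the piecewise-linear graph of $\psi$) has faces of slope $\le\theta_0$ and height $\le\theta_0/(2N)$ \emph{relative to $\R^d\times\{0\}$}, its image $\partial_R$ satisfies exactly the same bounds \emph{relative to $L_R$} — the point being that no tilt accumulates, because Definition \ref{defreif} only requires \emph{some} hyperplane at each scale, and the natural one here is $L_R$, not the fixed $\R^d\times\{0\}$. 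Passing from $\partial\Omega_{m+1}$ to $\partial\Omega_\infty$ inside $10B(a_R,\ell(R))$ costs only the total height of the finer blips sitting on $\partial_R$, which is $\le\sum_{k\ge1}(\theta_0/2N)\,8^{-k}\ell(R)\lesssim\theta_0\ell(R)/N$; taking $N=N(b,\theta_0,d)$ large keeps all these errors summable. Thus $\partial\Omega_\infty\cap 10B(a_R,\ell(R))$ is within $c\theta_0\ell(R)$ of $L_R$ and, up to that error, a $c\theta_0$-Lipschitz graph over a large portion of $L_R$.

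Finally, given $x\in\partial\Omega_\infty$ and $0<r\lesssim1$, I would choose an anchor cube: among the Whitney cubes $R\in\bigcup_m G_m$ with $\dist(x,\partial_R)\lesssim r$, take one of minimal generation with $\ell(R)\gtrsim r$ — such $R$ exists because in each face of each $\partial_R$ the Whitney cubes have side length comparable to their distance from the edges of the face, so the sizes available near $x$ run continuously down from $\sim 1$ (if $r\approx 1$ one takes $L_R=\R^d\times\{0\}$ instead) — and let $\PP(x,r)$ be the hyperplane through $x$ parallel to $L_R$. Then every point of $\partial\Omega_\infty\cap B(x,r)$ lies on some $\partial_{R'}$ with $R'$ a descendant of (or equal to) $R$ and $\ell(R')\lesssim r$, modulo finer bumps of total height $\lesssim\theta_0 r$; by the inductive step all these $\partial_{R'}$ are within $c\theta_0$ in direction and $c\theta_0 r$ in position of $L_R$, whence $\partial\Omega_\infty\cap B(x,r)\subset U_{c\theta_0 r}(\PP(x,r))$, while conversely $\partial_R$, being a $c\theta_0$-Lipschitz graph over a portion of $L_R$ that contains the projection of $B(x,r)$, puts $\partial\Omega_\infty$ within $c\theta_0 r$ of every point of $\PP(x,r)\cap B(x,r)$. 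This yields (a) with constant $c\theta_0$, and since at every scale $\Omega_\infty$ is precisely the region on the prescribed side of this graph, (b) is then immediate as well. \textbf{The main obstacle} I expect is the bookkeeping in this last step, especially near the limit edge set $E_\infty=\bigcup_m E_m$: one must check that the anchor cube can always be taken with $\ell(R)\sim r$, that every boundary piece meeting $B(x,r)$ descends from a common ancestor at scale $\gtrsim r$, and hence that the direction and position errors over all such pieces — which, if measured against the fixed $\R^d\times\{0\}$, would grow like (number of generations)$\times\theta_0$ — are in fact controlled by a single geometric series and stay $\le c\theta_0 r$.
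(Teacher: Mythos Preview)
The paper does not supply its own proof of this lemma: it is stated and immediately attributed to \cite[Lemma~7.1]{LNV}. So there is no in-paper argument to compare against; what you have written is already more than the paper provides.

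Your sketch is broadly in line with the approach of \cite{LNV} and \cite{Wolff}: use the self-similar blip structure to exhibit, at each location and scale, a hyperplane (namely the plane $L_R$ of a suitably chosen Whitney face $R$) that approximates $\partial\Omega_\infty$ to within $c\theta_0 r$, with the errors from successive generations summing as a geometric series. The reduction to scales $r\lesssim 1$, the use of $\|\nabla\psi\|_\infty\le\theta_0$ and $\|\psi\|_\infty\le\theta_0/(2N)$ to control height and slope of each blip relative to its own base plane, and the observation that no tilt accumulates because one is free to change the reference plane with the scale, are all the right ingredients. The obstacle you flag---selecting an anchor cube $R$ with $\ell(R)\sim r$ near the edge set $E_\infty$ and checking that all boundary pieces in $B(x,r)$ descend from a common ancestor at that scale---is exactly the technical point that needs care; in \cite{LNV} this is handled by the Whitney structure on each face (cubes have side length comparable to their distance to the face edges), which guarantees the existence of such an $R$. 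Since the paper is content to cite the result, your outline is more than adequate here.
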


\begin{rem}
 Similarly, for any fixed $\varepsilon>0$ and $\tau>0$, one may construct a {\it bounded} $\theta$-Wolff snowflake domain (see also \cite{LNV} and \cite{Wolff}). Indeed, this is done by taking the unit cube in $\mathbb{R}^{d+1}$ contained in the lower half-space that has $Q(1)$ as one of its faces (its "bottom" face). Then we just mimic the construction above to each face of the cube. We will denote this new domain by $\widetilde{\Omega}_{\infty}$. Notice here that $\widetilde{\Omega}_{\infty}\subset\Omega_{\infty}$ and
$$\d\widetilde{\Omega}_{\infty}\cap \{x\in \mathbb{R}^{d+1}:x_{d+1} <0\}=  \d\Omega_{\infty}\cap  \{x\in \mathbb{R}^{d+1}:x_{d+1}<0\}.$$
\end{rem}

Before we apply our results from the previous section let us introduce some notation. If $\mu$ is a Borel probability measure in $\mathbb{R}^{d+1}$, we define its \textit{lower pointwise dimension} at the point $x \in \supp \mu$ to be 
$$\underline d_\mu(x) = \liminf_{r \to 0} \frac{\log \mu(B(x,r))}{\log r}$$ 
and its \textit{upper pointwise dimension} at the point $x \in \supp \mu$ 
$$\overline d_\mu(x) = \limsup_{r \to 0} \frac{\log \mu(B(x,r))}{\log r}.$$ 
The common value $\underline d_\mu(x) =\overline d_\mu(x)=d_\mu(x)$, if it exists, we call it \textit{pointwise dimension} of $\mu$ at $x \in \supp \mu$.

Let $\dim_H(Z)$ be the {\it Hausdorff dimension of the set $Z$}. Given a measure $\mu$ on a set $\Lambda \subset \R^{d+1}$ the {\it Hausdorff dimension of $\mu$} is defined by
$$
\dim_H(\mu)= \inf\{ \dim_H(Z): Z \subset \Lambda \,\, \text{and} \,\, \mu(\Lambda \setminus Z)=0\}.
$$
Moreover,
$$
\dim_H(\mu)= \esssup \{\underline d_\mu(x): x \in \Lambda\},
$$
where the essential supremum is taken with respect to $\mu$ (see Proposition 3, \cite{BW}). In particular, if there exists a number $\delta$ so that $d_\mu(x)=\delta$ for $\mu$-a.e. $x \in \Lambda$, then $\dim_H(\mu)=\delta$. This criterion was established in \cite{Young} by Young.

The following theorem was  proved by Wolff \cite{Wolff} %and Lewis, Nystr\"om and Vogel \cite{LNV} respectively. 
\begin{theorem}\label{lem:Wolff}
For every $\theta>0$ there exists a bounded $\theta$-Wolff snowflake domain $\widetilde \com_\infty \subset \R^{d+1}$ built from a Lipschitz function function $\phi$ with Lipschitz constant at most $\theta$ such that $$d_{\om_{\widetilde \com_\infty}}(x) =s<d, \quad \text{for all}  \,\, x \in \d \widetilde \com_\infty \setminus \mathcal X,$$  where $\mathcal X \subset \widetilde \com_\infty$ is such that $\om_{\widetilde \com_\infty}(\mathcal X)=0$ and $ \om_{\widetilde \com_\infty}$ is the harmonic measure in $\widetilde \com_\infty$.
\end{theorem}

%\begin{lemma}\label{lem:LNV}
%Let $p>2$. Let also $\com_\infty \subset \R^{d+1}$ be a Wolff snowflake domain, $\Theta_{Q(1)}:=( P_{Q(1)} \cup \widetilde P_{Q(1)} )  \cap \d\com_\infty$ and $\widetilde\om_p$ the restriction of the $p$-harmonic measure on the set $\Theta_{Q(1)}$. Then we have that $d_{\widetilde \om_p}(x) < d$ for $\widetilde \om_p$-a.e. $x \in \Theta_{Q(1)}$.
%\end{lemma}

We shall use now Lemma \ref{lem:Wolff-Rflat} and Theorem \ref{lem:Wolff} %and \ref{lem:LNV}
in order to apply the results from the previous sections and obtain our main theorem.

Note that from now we identify $\{(\tilde x, x_{d+1})\in \R^{d+1}: x_{d+1}=0\}$ with $\R^d$. 

\vspace{3mm}
 
\begin{proof}[Proof of Theorem \ref{maintheorem}]

Let $\theta>0$ be a sufficiently small constant that will be chosen momentarily and assume that $\widetilde \com_\infty$ is a $\theta$-Wolff snowflake domain. Let us also fix a pole $z_0\in \widetilde \com_\infty$. 

By Theorem \ref{lem:Wolff} we have that $ d_{\om_ {{\widetilde\com}_\infty}} (\xi) \leq s< d$ for $\om_ {{\widetilde\com}_\infty}$-a.e. $\xi \in \partial \widetilde \Omega_\infty$ and thus, $\dim_H(\om_ {{\widetilde\com}_\infty})\leq s<d$. This implies that there exists a set $X \subset  \partial \widetilde\Omega_\infty$  so that $\om_ {{\widetilde\com}_\infty}(X)=1$, $\dim_H(X)  \leq s < d$ (which implies $\cH^d(X)=0)$ and $d_{\om_ {{\widetilde\com}_\infty}} (\xi)  \leq s< d$ for every $\xi\in X$. Therefore,  if we set
$$
Z_0:=\{ \xi \in X \cap \d\Omega_{\infty}\backslash (\mathbb{R}^{d}\backslash Q(1)): d_{\om_ {{\widetilde\com}_\infty}} (\xi) \leq  s\}, 
$$
then $\om_ {{\widetilde\com}_\infty}(Z_0)>0$ (recall that by the earlier remark, $ \d\Omega_{\infty}\backslash (\mathbb{R}^{d}\cup Q(1))\subset \d\tilde{\Omega}_{\infty}$). Furthermore,  for $\alpha\in (0,d-s)$, there exists $\rho_0 \leq \min\{\diam \widetilde \com_\infty, 1\}$ such that $\om_{\widetilde\com_\infty}(Z_1)>0$, where
$$
Z_1:=\left\{ \xi \in Z_0: \frac{\log \om_{{\widetilde\com}_\infty} (B(\xi,r)\cap \d \widetilde\com_\infty)}{\log r} < d-\alpha, \,\, \text{for any}\,\, r \in (0, \rho_0]\right\}.
$$
 Notice that this implies that $\om_{\widetilde \com_\infty}(B(\xi,r) \cap \d \widetilde \com_\infty )> r^{d-\alpha}$ for any $\xi \in Z_1$ and $0<r \leq \rho_0 \leq 1$.

 Let us fix $\xi_0 \in Z_1$. By the inner regularity of $\om_{\widetilde\com_\infty}$, there exist $r_0 \in (0, \rho_0]$ and a compact set $E \subset Z_1 \cap B(\xi_0, r_0)$ so that  $\om_{\widetilde \com_\infty}(E)>0$ and  $\om_{\widetilde \com_\infty}(B(\xi,r) \cap \d \widetilde \com_\infty )> r^{d-\alpha}$ for every $\xi \in E$ and $r \in(0,r_0)$. Since $E \subset \d \widetilde \com_\infty \cap \d \com_\infty$ and $\overline{\widetilde \com}_\infty \subset  \overline{\com}_\infty$, in view of the maximum principle, we have that $\om_{\com_\infty}(E)>0$ and $\om_{\com_\infty} ( B(\xi,r) \cap \d \com_\infty ) > r^{d-\alpha}$ for every $\xi \in E$ and $r \in(0,r_0)$. 
 
 We now fix $\varepsilon>0$ small enough. By Lemmas \ref{lemreif1} and \ref{lem:Wolff-Rflat}, if we choose $\theta \in (0,1)$ so that $c_1 \theta\leq\delta_0$, taking into account that $\Omega_\infty$ is $(c_1\theta,\infty)$-Reifenberg flat and hence
 $(\delta_0,\infty)$-Reifenberg flat, we infer that 
there exists a $(c\varepsilon^{1/2}, \infty)$-Reifenberg flat domain $\com$ such that $\overline{\com}_\infty \subset \overline{\com}$ and $E \subset \d\com \cap \d\com_\infty$. Once again, we apply the maximum principle and have that $\om_{\com}(E)>0$ and $\om_{\com} ( B(\xi,r) \cap \d \com_{\infty}) > r^{d-\alpha}$ for every $\xi \in E$ and $r \in(0,r_0)$. Hence, by Theorem \ref{thm:finitelength} we obtain that $\cH^d|_{\d \com}$ is locally finite, that is, Radon. This concludes our theorem since $E$ is a compact subset of $\Omega$ so that $\cH^d(E)=0$ and $\om_\com(E)>0$.
\end{proof}

\end{document}